\newcommand{\T}{\mathbb{T}}
\newcommand{\Z}{\mathbb{Z}}
\newcommand{\R}{\mathbb{R}}
\newtheorem{lemma}{Lemma}[section]
\newtheorem{theorem}[lemma]{Theorem}
\newtheorem{rem}[lemma]{Remark}
\newtheorem{remark}[lemma]{Remark}
\begin{document}
\title[]{Blow-up for the 1D cubic NLS}
\author[V. Banica]{Valeria Banica}
    \address[V. Banica]{Sorbonne Universit\'e, CNRS, Universit\'e de Paris, Laboratoire Jacques-Louis Lions (LJLL), F-75005 Paris, France, and Institut Universitaire de France (IUF)} 
\email{Valeria.Banica@sorbonne-universite.fr}

\author[R.Luc\`a]{Renato Luc\`a}
\address[R. Luc\`a]{BCAM - Basque Center for Applied Mathematics, Alameda de Mazarredo 14, E48009 Bilbao, Basque Country - Spain and Ikerbasque, Basque Foundation
for Science, 48011 Bilbao, Basque Country - Spain.}
\email{rluca@bcamath.org}

\author[N. Tzvetkov]{Nikolay Tzvetkov}
    \address[N. Tzvetkov]{Ecole Normale Sup\'erieure de Lyon, UMPA, UMR CNRS-ENSL 5669, 46, all\'ee d'Italie, 69364-Lyon Cedex 07, France} 
\email{nikolay.tzvetkov@ens-lyon.fr}

\author[L. Vega]{Luis Vega}
\address[L. Vega]{BCAM - Basque Center for Applied Mathematics, Alameda de Mazarredo 14, E48009 Bilbao, and Dpto. Matem\'aticas UPV/EHU Apto. 644
48080 Bilbao, Basque Country - Spain} 
\email{luis.vega@ehu.es}
\date\today
\maketitle

\begin{abstract}
We consider the 1D cubic NLS on $\mathbb R$ and prove a blow-up result for functions that are of borderline regularity, i.e. $H^s$ for any $s<-\frac 12$ for the Sobolev scale and $\mathcal F L^\infty$ for the Fourier-Lebesgue scale. This is done by identifying at this regularity a certain functional framework from which solutions exit in finite time. This functional framework allows, after using a pseudo-conformal transformation, to reduce the problem to a large-time study of a periodic Schr\"odinger equation with non-autonomous cubic nonlinearity. The blow-up result corresponds to an asymptotic completeness result for the new equation. We prove it using Bourgain's method and exploiting the oscillatory nature of the coefficients involved in the time-evolution of the Fourier modes. Finally, as an application we exhibit singular solutions of the binormal flow. More precisely, we give conditions on the curvature and the torsion of an initial smooth curve such that the constructed solutions generate several singularities in finite time.\end{abstract}


\section{Introduction}
We consider the Cauchy problem for the focusing 1D cubic NLS on $\mathbb R$: 
\begin{equation}\label{NLS}
iu_t+u_{xx}+ |u|^2u=0,\quad u\vert_{t=0}=u_0.
\end{equation} 
The results in this paper hold also in the defocusing case but for simplicity we will work only with the focusing case.

We recall that with respect to the scaling $\lambda u(\lambda^2 t,\lambda x)$ that leaves invariant the equation, the invariant space on the Sobolev scale is $\dot H^{-\frac 12}$ and the one on the Fourier-Lebesgue scale is $\mathcal F L^\infty$, i.e. Fourier transform in $L^\infty$. 

We shall briefly review the initial value problem results for \eqref{NLS} on Sobolev and Fourier-Lebesgue spaces. The equation is well-posed in $H^s$ for $s\geq 0$  (see \cite{T,GV,CW}) and the flow map is uniformly continuous on bounded sets of $H^s$. It turned out that the last property cannot hold in $H^s$ for $s< 0$ (see \cite{KPV,CCT}). In \cite{KT,KVZ}, it was shown that the Sobolev norms of Schwartz solutions have controlled growth for $-\frac 12< s<0$. In  \cite{HKV}, the authors prove the global well-posedness of \eqref{NLS} in $H^s$ for $s>-1/2$  in the sense that the solution map on the  Schwartz class admits a unique continuous extension to $H^s$, $s>-1/2$. Therefore in  \cite{HKV} the assumption on the initial data is 
\begin{equation}\label{restrr1}
\langle \xi\rangle^{-\frac{1}{2}^+}\widehat{u_0}(\xi)\in L^2.
\end{equation}
The result of  \cite{HKV}  is sharp in the sense that for $s<-1/2$  a norm inflation with loss of regularity appear \cite{CK,Ki,Oh}, in particular \eqref{NLS} is ill-posed in the Hadamard sense in $H^s$, $s<-1/2$. In what concerns the Fourier-Lebesgue spaces, the problem is locally well-posed on $\mathcal F L^p$ for $p<+\infty$  (see \cite{G,VV}). Therefore one can prove local well-posedness of \eqref{NLS} for data satisfying 
\begin{equation}\label{restrr2}
\widehat{u_0}(\xi)\in L^p,\quad \forall\, p<\infty.
\end{equation}
Recall that for $c_0\in\mathbb R$, the Galilean transformation $e^{-ic_0^2 t+ic_0 x}u(t,x-2c_0 t)$ leaves invariant the set of solutions of \eqref{NLS}, as well as the space $\mathcal F L^p$ for all $p\leq \infty$.

In summary, we presently have a well-posedness theory that misses the critical spaces
with respect to scaling. Let us notice that the Dirac mass and its linear Schr\"odinger evolution $e^{it\Delta}\delta_0=\frac{e^{i\frac{x^2}{4t}}}{\sqrt t}$ are of borderline regularity with respect to the scaling and the Galilean symmetries and therefore do not fit in the scope of applicability of the previously described results.
Actually there are simple explicit solutions of (1) for $t > 0$, having the same kind of borderline regularity, given by
$$u_\alpha(t, x) = \alpha e^{i|\alpha|^2\log t} \,\,\frac{e^{i\frac{x^2}{4t}}}{\sqrt t},\quad\alpha\in\mathbb C.$$
In this spirit recently a new insight has been given to solutions of (1) of critical regularity. Solutions which are smooth perturbations of $u_\alpha$ were constructed in \cite{BVCMP, BVJEMS, BVARMA, BVAnnENS,BrV}, solutions of type superpositions of $u_\alpha$ were constructed in \cite{BVAnnPDE, BrV} and smooth perturbations of the latter are given in \cite{Gu}.

Our goal in this work is to prove a suitable form of local well-posedness of \eqref{NLS} for initial data satisfying 
\begin{equation}\label{restrr3}
e^{i \xi^2}\widehat{u_0}(\xi) \mbox { is a } 2\pi\mbox{-periodic function in }L^2(0,2\pi).
\end{equation}
More importantly, the solutions blow-up in finite time due to a loss of information of some phases. Clearly if $u_0\neq 0$ satisfies \eqref{restrr3} then it does not satisfy neither 
\eqref{restrr1} nor \eqref{restrr2}.

Let us next give a  motivation behind the assumption \eqref{restrr3}. It would be more convenient to assume that the initial data is prescribed at time $t=1$ (this can always be achieved thanks to the invariance under translations in time).  Let us notice that $u_\alpha(1,x)$ satisfies \eqref{restrr3} and Theorem \ref{soft_statement} will include $u_\alpha(t,x)$ in the framework of a suitable local well-posedness and blow-up results.
More precisely, here is a soft statement of our first main result. 
\begin{theorem}\label{soft_statement}
Let $s\in (0,1/2)$ and let $u_0$ satisfying \eqref{restrr3}  be such that 
$$
e^{i \xi^2}\widehat{u_0}(\xi)\in H^s(\T)\,,
$$ 
where $\T=\R/ 2\pi\Z$. Then there exists a unique, in a suitable sense, solution of 
$$
iu_t+u_{xx}+ |u|^2u=0
$$
with datum $u_0$ at time $t=1$ in the interval $(0,1]$. The solution $u(t)$ belongs for $t\in(0,1]$ to the functional framework 
$$
e^{it\xi^2}\hat u(t,\xi) \mbox { is }2\pi\mbox{-periodic},
$$
and falls out of this framework at $t=0$.  The dependence with respect to the initial data is continuous in the following sense. 
Let $(u_{0,n})$ be a sequence of initial data satisfying \eqref{restrr3} such that 
$
(e^{i \xi^2}\widehat{u_{0,n}}(\xi)) 
$
converges to 
$
e^{i \xi^2}\widehat{u_0}(\xi)
$
in $H^s(\T)$.  Then for every $t\in (0,1]$ the sequence 
$
(e^{i t\xi^2}\widehat{u_n(t)}(\xi)) 
$
converges to 
$
e^{i t\xi^2}\widehat{u(t)}(\xi)
$
in $H^s(\T)$, where $u_n$ is the solution of \eqref{NLS} with data $u_{0,n}$. 
We have that 
$
u\in C\big((0,1]; {\mathcal S}'(\R)\big)
$
but it blows-up at zero in the sense that 
$
\lim_{t\rightarrow 0^+} u(t,\cdot)
$
does not exist in ${\mathcal S}'(\R)$. 
\end{theorem}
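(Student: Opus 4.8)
The plan is to pass to the interaction representation, recognise the prescribed framework as a periodicity condition on the Fourier side, and then use a pseudo-conformal change of the time variable to turn the singular limit $t\to 0^+$ into a large-time problem on the torus, for which the blow-up becomes an asymptotic completeness statement. I would set $g(t,\xi):=e^{it\xi^2}\widehat{u(t)}(\xi)$, the Fourier transform of the profile $e^{-it\partial_{xx}}u(t)$; the framework is exactly the requirement that $g(t,\cdot)$ be $2\pi$-periodic, and \eqref{restrr3} together with the hypothesis reads $g(1,\cdot)\in H^s(\T)$. Expanding $g(t,\xi)=\sum_{k\in\Z}a_k(t)e^{ik\xi}$, periodicity forces the profile to be a Dirac comb on the integers, so that
\[
u(t,x)=\frac{1}{\sqrt{4\pi i t}}\sum_{k\in\Z}a_k(t)\,e^{i\frac{(x-k)^2}{4t}}.
\]
Inserting this ansatz into \eqref{NLS} and collecting the coefficient of $\delta_k$ in $e^{-it\partial_{xx}}(|u|^2u)$, a direct computation based on the identity $(x-k_1)^2-(x-k_2)^2+(x-k_3)^2=(x-k)^2-2(k_1-k_2)(k_3-k_2)$ valid for $k=k_1-k_2+k_3$ yields the ODE system
\[
i\,\dot a_k(t)=\frac{c}{t}\sum_{k_1-k_2+k_3=k}a_{k_1}\bar a_{k_2}a_{k_3}\,
e^{-i\frac{(k_1-k_2)(k_3-k_2)}{2t}},\qquad t\in(0,1],
\]
which I would adopt as the definition of a solution in the suitable sense. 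Separating the resonant interactions $k_2\in\{k_1,k_3\}$ from the rest gives $i\dot a_k=\frac{c}{t}(2M-|a_k|^2)a_k+\frac{c}{t}\mathcal N_k$, where $M:=\sum_k|a_k|^2=\tfrac{1}{2\pi}\|g\|_{L^2(\T)}^2$ is conserved and $\mathcal N_k$ gathers the non-resonant, oscillatory terms.

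Next I would perform the pseudo-conformal change $\tau=\log(1/t)$ together with the gauge $a_k=e^{-2icM\log t}\tilde a_k$, which removes both the factor $c/t$ and the global phase carried by $2M$. In the variable $\tau\in[0,\infty)$, so that $t\to 0^+$ corresponds to $\tau\to+\infty$, the system becomes
\[
i\,\partial_\tau \tilde a_k=|\tilde a_k|^2\tilde a_k-\sum_{\substack{k_1-k_2+k_3=k\\(k_1-k_2)(k_3-k_2)\neq0}}\tilde a_{k_1}\bar{\tilde a}_{k_2}\tilde a_{k_3}\,
e^{-i\frac{(k_1-k_2)(k_3-k_2)}{2}e^{\tau}},
\]
that is, the non-autonomous cubic periodic Schr\"odinger equation for $\phi(\tau,x)=\sum_k\tilde a_k(\tau)e^{ikx}$ on $\T$ announced in the abstract. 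Its resonant part is the explicitly integrable dynamics $\tilde a_k(\tau)=\tilde a_k(0)e^{-i|\tilde a_k(0)|^2\tau}$, and its non-autonomy resides entirely in the coefficients oscillating like $e^{-ic'e^{\tau}}$; the special solutions $u_\alpha$ correspond here to a single nonzero mode.

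The analytic core is global well-posedness on $[0,\infty)$ and asymptotic completeness for this equation in $H^s(\T)$ with $s\in(0,1/2)$, which I expect to be the main obstacle, the difficulty being that $H^s(\T)$ is not an algebra for $s<1/2$. Local well-posedness I would obtain through Bourgain's method, using the periodic $L^4$ (and $L^6$) Strichartz estimates and the associated $X^{s,b}$ spaces. To globalise and to extract the large-$\tau$ behaviour I would exploit the oscillation of the coefficients: integrating in $\tau$ and undoing the change of variables, every non-resonant contribution carries an oscillatory integral of the form $\int_0 e^{-ic'/(2t)}\,\tfrac{dt}{t}$ which, by integration by parts (non-stationary phase), converges at $t=0$ and is in fact integrable in $\tau$. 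Combined with the conservation of $M$, this controls the growth of $\|\phi(\tau)\|_{H^s(\T)}$, yields global existence, and shows that $\tilde a_k(\tau)$ undergoes modified scattering: $|\tilde a_k(\tau)|\to|\tilde a_k^\infty|$ for each $k$ as $\tau\to\infty$, while the phases keep rotating at the nonzero rates $|\tilde a_k^\infty|^2$. This is precisely the asymptotic completeness statement equivalent to the blow-up.

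Finally I would assemble the conclusions. Existence, uniqueness within the framework, and the stated continuous dependence are reformulations of the well-posedness of the transformed equation in $H^s(\T)$, since the convergence $e^{i\xi^2}\widehat{u_{0,n}}\to e^{i\xi^2}\widehat{u_0}$ in $H^s(\T)$ is exactly convergence of the profiles at $\tau=0$, propagated by the flow; the membership $u\in C\big((0,1];\mathcal S'(\R)\big)$ follows from $\phi\in C\big([0,\infty);H^s(\T)\big)$ together with the continuity, for each fixed $t>0$, of the map sending the comb coefficients to $u(t)$ in $\mathcal S'(\R)$. For the blow-up I would test $u(t)$ against $\psi\in\mathcal S(\R)$ localised near an integer $k_0$ with $\psi(k_0)=1$; since $\tfrac{1}{\sqrt{4\pi i t}}e^{i(x-k)^2/4t}\to\delta_k$ in $\mathcal S'(\R)$, one gets $\langle u(t),\psi\rangle=a_{k_0}(t)+o(1)$ as $t\to0^+$. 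As $M>0$ forces $\tilde a_{k_0}^\infty\neq0$ for some $k_0$, the argument of $a_{k_0}(t)=e^{-2icM\log t}\tilde a_{k_0}(t)$ behaves like $e^{-i|\tilde a_{k_0}^\infty|^2\log(1/t)}$ up to the gauge phase and hence does not converge, so $\lim_{t\to0^+}u(t)$ does not exist in $\mathcal S'(\R)$, which is the desired blow-up.
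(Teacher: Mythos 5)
Your overall strategy coincides with the paper's: the Dirac--comb ansatz forced by periodicity of $e^{it\xi^2}\widehat u(t,\xi)$, the reduction to the coupled ODE system for the Fourier coefficients with phases $e^{-i(k_1-k_2)(k_3-k_2)/2t}$, the splitting into the resonant part $\tfrac{c}{t}(2M-|a_k|^2)a_k$ that produces the logarithmic phase and a non-resonant oscillatory remainder, local solvability by Bourgain's method glued over unit intervals, and the reading of the blow-up as modified scattering / asymptotic completeness for the transformed periodic equation (your substitution $\tau=\log(1/t)$ versus the paper's $\tau=1/t$ is immaterial). Your final step --- testing against a Schwartz function vanishing at all integers but one so that non-convergence of a single coefficient $a_{k_0}(t)$ yields non-convergence in $\mathcal S'(\R)$ --- is also sound, provided you add that the uniform $l^{2,s}$ bound supplies the tightness needed to get $\sum_k|\alpha_k|^2=M>0$, hence some $\alpha_{k_0}\neq 0$ with $|\alpha_{k_0}|^2\le M<2M$, so that the phase rate $|\alpha_{k_0}|^2-2M$ cannot vanish.

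The genuine gap is in what you yourself call the analytic core. You dispose of the non-resonant terms by asserting that each carries an oscillatory integral of the form $\int e^{-ic'/(2t)}\,dt/t$, convergent by non-stationary phase. But the object to be controlled is not a scalar oscillatory integral: it is the sum over all non-resonant quadruples $(k,j_1,j_2,j_3)$ of such integrals weighted by $a_{j_1}\bar a_{j_2}a_{j_3}\bar a_k$ and by the gain $1/\omega$ with $\omega=(k-j_1)(j_1-j_2)$, and --- crucially --- after the integration by parts the derivative falls on the unknown coefficients, producing $\partial_\tau a$, which is again cubic; one is left with a sextilinear expression to be summed and estimated at a regularity $s<1/2$ where $H^s(\T)$ is not an algebra. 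This is exactly where the paper spends its effort: Lemma \ref{lemmabound} shows the discrete weight $(\langle k\rangle^{2r}-\langle j_1\rangle^{2r}+\langle j_2\rangle^{2r}-\langle j_3\rangle^{2r})/\omega$ is summable precisely because $r<1/2$; the divisor bound $\#\{(j_1,j_2):(k-j_1)(j_1-j_2)=m\}\lesssim_{\epsilon}|m|^{\epsilon}$ together with the pointwise decay in $\lambda$ of $\mathcal F(\eta_\nu\partial_tB_k)$ (Lemma \ref{bounddert}) handles the term where the derivative hits a coefficient; and the multilinear $X^{s,b}$ estimates with the $L^4/L^6$ interpolation close the sextilinear term in Lemma \ref{lemmawev}. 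Without these ingredients (or substitutes for them), neither the uniform-in-time $H^s$ bound, nor the $O(1/t)$ convergence of the modified profiles in Lemma \ref{propas}, nor the continuous-dependence statement of the theorem follows from your sketch; the proposal points in the right direction but leaves the decisive estimates unproved.
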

\begin{remark}
The Theorem \ref{soft_statement} extends to $s\geq \frac 12$ by invoking Lemma \ref{lemmawevbis}. Uniqueness will be specified in Theorem \ref{wbl}.
\end{remark}

\begin{remark}
A similar statement holds for data satisfying 
$$
\exists\, \theta\in\R\,\, \colon\,\, e^{i\theta \xi^2}\widehat{u_0}(\xi) \mbox { is }2\pi\mbox{-periodic}
$$
with a different blow-up time (depending on $\theta$).
\end{remark}
Below, we will give a much more precise description of the lack of convergence of $u(t,\cdot)$ as $t\rightarrow 0^+$.

We next continue the motivation of  the assumption \eqref{restrr3}. Observe that for solutions of the free Schr\"odinger equation
$$
iu_t+u_{xx}=0,
$$
it is immediate to see that if $e^{i\xi^2}\widehat u(1, \xi)$ is $2\pi-$periodic then $e^{it\xi^2}\widehat u(t, \xi)$ is also $2\pi-$periodic, for every $t\in\mathbb R$. In fact
$$e^{it\xi^2}\widehat u(t, \xi)=\widehat u_0(\xi)=\sum\limits_{k\in \Z} a_ke^{ik\xi}
$$
for all $\xi$ and for some Fourier coefficients $a_k$. Moreover, any way to measure the size of $\widehat u(t,\xi)$ described in terms of the size of $|a_k|$ will be preserved for all time. It is not so obvious that the periodicity of 
$$
w(t,\xi):=e^{it\xi^2}\widehat u(t, \xi),
$$
holds, at least formally, for all $t$ for solutions of the nonlinear problem  \eqref{NLS}. However let us notice that the equation for $w$ is:
$$
w_t(\eta, t)= \frac{i}{8\pi^3}e^{-it\eta^2}\int e^{it(\xi_1^2-\xi_2^2+(\eta-\xi_1+\xi_2)^2)} w(\xi_1,t)\bar w(\xi_2,t)w(\eta-\xi_1+\xi_2,t) \,d\xi_1d\xi_2.
$$
The phase can be factorized as $(\xi_1-\xi_2)(\xi_1-\eta)$, so it is invariant under translations. Thus the equation of $w$ is compatible with periodicity; the solutions we shall construct are in this framework. 
Now we shall give a version of Theorem \ref{soft_statement} that takes in account the physical variable point of view. 
\begin{theorem}\label{wbl}
For initial data 
$$u_0(x)=e^{i\frac{x^2}4}f(x),$$
at time $t=1$, with $f$ a periodic function in $H^s(0,4\pi)$ and $s\in (0,\frac 12)$ there exists a unique solution of the 1D cubic NLS equation \eqref{NLS} that blows up in finite time in the following sense: the solution belongs for $t\in(0,1]$ to the functional framework 
\begin{equation}\label{ansatzth}
w(t,\xi):=e^{it\xi^2}\hat u(t,\xi) \mbox { is }2\pi\mbox{-periodic},
\end{equation}
$w\in C\big((0,1]; H^s(\T)\big),$
and falls out of this framework at $t=0$. More precisely, by denoting $A_j(t)$ the Fourier coefficients of $w(t)$, then 
$$\sup_{t\in(0,1]}\|\{A_j(t)\}_{j\in\mathbb Z}\|_{l^{2.s}}\leq C(\|f\|_{H^s(0,4\pi)}),$$
and there exists a sequence $\{\alpha_j\}_{j\in\mathbb Z}\in l^{2,s}$ such that
\begin{equation}\label{blupth}
|A_j(t)-e^{ i(|\alpha_j|^2-2\|f\|_{L^2(0,4\pi)}^2)\log t}\alpha_j|\leq C(\|f\|_{H^s(0,4\pi)})\,t, \quad \forall j\in\mathbb Z, t\in(0,1).
\end{equation}
Finally the solution can be written as
$$u(t,x)=\sum_{j\in\mathbb Z} A_j(t)\frac{e^{i\frac{(x-j)^2}{4t}}}{\sqrt{t}}=\sum_{j\in\mathbb Z} A_j(t)e^{it\Delta}\delta_j,$$
where $\delta_j$ is the Dirac mass at $j$, so in particular $u$ focalises at $t=0$.
\end{theorem}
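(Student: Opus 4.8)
The plan is to reduce the entire statement to an infinite system of ODEs for the Fourier coefficients of $w$ and then to a large-time asymptotic analysis. Since the phase $(\xi_1-\xi_2)(\xi_1-\eta)$ is translation invariant, the class \eqref{ansatzth} is preserved, so I would expand $w(t,\xi)=\sum_{j\in\Z}A_j(t)e^{ij\xi}$ and insert this into the evolution equation for $w$ recalled above. Carrying out the two Gaussian integrations in $\xi_1,\xi_2$ produces a $\delta$-constraint forcing $k_1-k_2+k_3=n$ and leaves an explicit phase, yielding a system of the form
\[
\dot A_n(t)=\frac{c}{t}\sum_{k_1-k_2+k_3=n}e^{-\frac{i}{4t}\Omega}\,A_{k_1}\bar A_{k_2}A_{k_3},\qquad \Omega=k_1^2-k_2^2+k_3^2-n^2,
\]
with $c$ purely imaginary. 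The key algebraic fact I would record is $\Omega=2(k_1-k_2)(k_2-k_3)$, so that on the \emph{nonresonant} set ($\Omega\ne0$) one has $|\Omega|\ge2$, while the \emph{resonant} set contributes exactly $(2M-|A_n|^2)A_n$ with $M=\sum_j|A_j|^2$. Gauge invariance of the cubic nonlinearity gives conservation of $M$, and the chirp normalization $u_0=e^{ix^2/4}f$ — which forces $f$ to be $4\pi$-periodic and $A_j(1)$ to be (a multiple of) its Fourier coefficients — identifies $M$ with $\|f\|_{L^2(0,4\pi)}^2$. Inverting the Fourier transform term by term in $\hat u=e^{-it\xi^2}w$ then produces the representation $u(t,x)=\sum_j A_j(t)\,e^{it\Delta}\delta_j$.

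Next I would set $\tau=\log(1/t)$, so that $t\to0^+$ becomes $\tau\to+\infty$ and the singular prefactor $1/t$ is exactly absorbed, turning the system into
\[
\partial_\tau A_n=-c\sum_{k_1-k_2+k_3=n}e^{-\frac{i}{4}e^{\tau}\Omega}\,A_{k_1}\bar A_{k_2}A_{k_3}.
\]
The resonant part is a pure phase rotation, since $c$ is imaginary and $M$ is conserved; I would strip it off via a gauge $A_n=e^{i\Theta_n(\tau)}B_n$, where $\Theta_n$ collects the $2M$ and the self-interaction contributions. Translated back through $\tau=-\log t$, this is precisely the modulation $e^{i(|\alpha_j|^2-2\|f\|_{L^2(0,4\pi)}^2)\log t}$ appearing in \eqref{blupth}. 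The nonresonant part carries the rapidly oscillating coefficients $e^{-\frac{i}{4}e^{\tau}\Omega}$, and the whole analysis rests on exploiting this oscillation.

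The heart of the proof is the a priori bound together with asymptotic completeness. Because the resonant terms contribute only a phase and do not affect $\|A\|_{l^{2,s}}$, its growth is driven solely by the nonresonant sum; I would integrate in $\tau$ and integrate by parts using $e^{-\frac{i}{4}e^{\tau}\Omega}=\tfrac{4i}{\Omega}e^{-\tau}\,\partial_\tau e^{-\frac{i}{4}e^{\tau}\Omega}$. Since $e^{-\tau}=t$, each such step trades the oscillation for a gain of order $t/\Omega$, at the cost of a boundary term and a quintic term (where $\partial_\tau$ falls on the coefficients through the equation, itself again cubic); this is Bourgain's normal-form mechanism. The delicate point — and the main obstacle — is to sum the resulting trilinear and quintic expressions at regularity $0<s<1/2$, where $H^s(\T)$ is \emph{not} an algebra. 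Here the divisor $1/\Omega=1/\big(2(k_1-k_2)(k_2-k_3)\big)$ is decisive: it provides smoothing in both frequency gaps $|k_1-k_2|$ and $|k_2-k_3|$, and combined with the weight $\langle n\rangle^{2s}$, $n=k_1-k_2+k_3$, it renders the sums convergent. The $t$-gain then yields $\frac{d}{d\tau}\|A\|_{l^{2,s}}^2\lesssim t\,P\!\left(\|A\|_{l^{2,s}}\right)$, which closes by a bootstrap for $t$ small and is supplemented by the local Cauchy theory near $t=1$, giving $\sup_{t\in(0,1]}\|A(t)\|_{l^{2,s}}\le C(\|f\|_{H^s(0,4\pi)})$.

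Finally I would conclude. The same integration by parts shows that the de-gauged coefficients $B_n(\tau)$ form a Cauchy family as $\tau\to\infty$ with tail of size $O(t)$; their limits lie in $l^{2,s}$, and undoing the gauge produces $\{\alpha_j\}\in l^{2,s}$ together with the estimate \eqref{blupth}. Uniqueness in the class $w\in C\big((0,1];H^s(\T)\big)$ follows from the same trilinear bounds via a Gronwall argument applied to the difference of two solutions. For the blow-up it suffices to observe that the modulation $e^{i(|\alpha_j|^2-2\|f\|_{L^2(0,4\pi)}^2)\log t}$ has no limit as $t\to0^+$ whenever $|\alpha_j|^2\ne2\|f\|_{L^2(0,4\pi)}^2$, so that $w(t,\cdot)$, and hence $u(t,\cdot)=\sum_j A_j(t)e^{it\Delta}\delta_j$, cannot converge in $\mathcal S'(\R)$, while the representation itself exhibits the focalization of $u$ at the integers as $t\to0^+$.
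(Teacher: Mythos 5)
Your overall architecture coincides with the paper's: reduce to the ODE system for the Fourier coefficients, split into resonant and nonresonant parts, remove the resonant contribution by a logarithmic gauge, treat the nonresonant part by integration by parts against the oscillation, and pass to the limit of the gauged coefficients. The factorization $\Omega=2(k_1-k_2)(k_2-k_3)$, the identification of the resonant term with $(2M-|A_n|^2)A_n$, and the final phase-loss argument all match the paper.

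There is, however, a genuine gap at the one step on which the theorem hinges. You assert that the divisor $1/\Omega$ ``renders the sums convergent'' at regularity $0<s<\frac12$ and that this yields a differential inequality $\frac{d}{d\tau}\|A\|_{l^{2,s}}^2\lesssim t\,P(\|A\|_{l^{2,s}})$ closable by bootstrap. This is precisely what fails if one only uses fixed-time $l^{2,s}$ information. The cubic boundary terms can indeed be summed using $|\varphi|\lesssim 1/\max\{\langle k\rangle,\langle j_1\rangle,\langle j_2\rangle,\langle j_3\rangle\}$ (Lemma \ref{lemmabound}), but the quintic terms produced when $\partial_\tau$ falls on a coefficient cannot: for $s<\frac12$ the space $H^s(\T)$ is not an algebra, and a codimension-one convolution sum of six sequences controlled only in $l^2$ (mass) and $l^{2,s}$ is not summable even with one power of divisor smoothing --- the paper's footnote states explicitly that this term ``is controllable directly for $\{B_j\}\in l^{2,s}$ with $s>\frac12$, but not for lower regularity.'' What the paper actually uses, and what is absent from your sketch, is: (i) the number-theoretic divisor bound $\#\{(j_1,j_2):(k-j_1)(j_1-j_2)=m\}\lesssim_{\epsilon'}|m|^{\epsilon'}$; (ii) the construction of the solution on unit time intervals in Bourgain spaces $X^{s,b}$, with the estimate \eqref{Best} and the periodic Strichartz embeddings $X^{0+,\frac12+}\hookrightarrow L^{6}_{\tau,x}$ and its $L^{6\pm\epsilon'}$ variants, which is how the quintic terms are summed in Lemma \ref{lemmawev}; and (iii) a separate pointwise-in-$\lambda$ decay estimate for $\mathcal F(\eta_\nu\partial_t B_k)$ (Lemma \ref{bounddert}), needed to obtain the $O(t)$ rate in \eqref{blupth}. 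Two further consequences: the gain from the oscillation is only available after integrating in time, so one obtains an integral inequality with a supremum to be absorbed rather than a pointwise Gr\"onwall inequality; and for the same algebra failure your proposed Gr\"onwall uniqueness argument in $C\big((0,1];H^s(\T)\big)$ does not close for $s<\frac12$ --- uniqueness holds in the class of solutions produced by the $X^{s,b}$ iteration.
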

\begin{remark}
As a consequence of the above result even though $|A_j(t)|$ have a limit for all $j$ when $t$ goes to zero, the Fourier coefficient themselves don't, due to the the phase loss $(|\alpha_j|^2-2\|f\|_{L^2(0,4\pi)}^2)\log t$. This is what we refer as a phase blow up. Loss of phase phenomena for nonlinear Schr\"odinger equation have been noticed in \cite{M,MRS}.
\end{remark}

The proof of Theorem \ref{wbl} goes as follows. Using the pseudo-conformal transformation
\begin{equation}\label{pseudo}
v(\tau,y):=\frac{e^{i\frac{y^2}{4\tau}}}{\sqrt {\tau}}\,\overline u(\frac 1\tau,\frac y{\tau}),
\end{equation}
and imposing the ansatz \eqref{ansatzth} reduces the problem to constructing solutions $v(\tau)$ for $\tau\geq 1$ for a $2\pi$-periodic Schr\"odinger equation with a non-autonomous cubic nonlinearity:
\begin{equation}\label{NLSvtauy}
iv_\tau+v_{yy}+ \frac 1{\tau}|v|^2v=0.
\end{equation} 
These solutions are obtained by gluing solutions on time-intervals of size one constructed in turn by using Bourgain's arguments \cite{B94}. In a next step we start the study of their asymptotic behaviour as $\tau$ goes to infinity. To do so we first derive pointwise estimates for the Fourier in time transform of the derivative in time of the Fourier modes of $v(\tau)$. Then, we show that the $H^s$-norms of $v(\tau)$ are bounded in time by using also strongly the oscillatory nature of the coefficients involved in the time-evolution of the Fourier modes of $v(\tau)$. Finally the limit of the modulations of the Fourier modes of $v(\tau)$, that yields the blow-up phenomena of Theorem \ref{wbl}, is obtained by gathering all this previous information and using again the oscillatory nature of the time-evolutions of the Fourier modes of $v(\tau)$.

Existence of solutions $u$ as in Theorem \ref{wbl} was proved in \cite{BVAnnPDE} for more regular data, and correspond to existence of wave operators for the equation on $v$. More precisely given $\{\alpha_k\}_{k\in\mathbb Z}\in l^{2,s}$ with $s>\frac 12$ solutions for \eqref{NLS} of type \eqref{ansatzth} and satisfying \eqref{blupth} were obtained, but without any information on the set of functions $f$ reached at $t=1$. Regular perturbations of such solutions were constructed in \cite{Gu}. Also, in \cite{BrV} the case when $\{\alpha_k\}_{k\in\mathbb Z}\in l^p$ was treated, and local in time small solutions were obtained for $1<p<\infty$, with an improvement of the time of existence under smallness in $l^\infty$, and global small solutions were obtained for $p=2$. In turn, Theorem \ref{wbl} is an asymptotic completeness result for the equation on $v$, at regularity $ l^{2,s}$ with  $0<s<\frac 12$, i.e. given any $f\in H^s$ we construct a solution of \eqref{NLS} of type \eqref{ansatzth} and satisfying \eqref{blupth}. 

Finally, based on Theorem \ref{wbl} we prove a result for the binormal flow:
$$\chi_t(t,x)=\chi_x\times\chi_{xx}(t,x),$$
which is used as a model for the dynamics in time in a 3D fluid of superfluid of a filament of vorticity concentrated on a 3D-curve $\chi(t)$ parametrized by arclenght parameter $x$ (\cite{DaR}, \cite{JS} and the references therein). The binormal flow is rigorously connected to the focusing equation \eqref{NLS} due to Hasimoto's transform \cite{H}. The self-similar solutions of the binormal flow, that are smooth curves generating a corner in finite time, correspond to phenomena that appear in fluids and superfluids (Figures 1 and 2), and were studied rigorously in \cite{GRV}. 
We recall that in \cite{BVAnnPDE} it is constructed the evolution in time, according to the binormal flow, of curves that are initially  polygonal lines and that become instantaneously smooth. As a consequence, since the binormal flow is time reversible, smooth curves can generate several singularities in the shape of corners in finite time. This result is based on the existence of wave operators for the Schr\"odinger equation \eqref{NLSvtauy}. Here, we shall use the asymptotic completeness result for \eqref{NLSvtauy}, stemming from Theorem \ref{wbl}, to obtain a general criterium for generating singularities in finite time through the binormal flow. 

\begin{theorem}\label{thbf}
Let $\chi_1$ be a curve with $4\pi-$periodic curvature and torsion equal to $\frac x{2}$ modulo a $H^{\frac 32^+} 4\pi$-periodic function. Then there exists $\chi(t)$, with $\chi(1)=\chi_1$, a strong solution of the binormal flow on $\mathbb R^*$ and weak solution on $\mathbb R$ which generates several corner-singularities at time $t=0$.
\end{theorem}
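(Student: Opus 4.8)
The plan is to use the Hasimoto transform to convert Theorem \ref{thbf} into a statement about the NLS equation \eqref{NLS}, and then apply Theorem \ref{wbl}. Recall that Hasimoto's transform associates to a solution $\chi(t,x)$ of the binormal flow the filament function $u(t,x) = c(t,x) e^{i\int_0^x \tau(t,x')\,dx'}$, where $c$ is the curvature and $\tau$ the torsion of $\chi(t,\cdot)$, and that $u$ solves \eqref{NLS}. Thus the first step is to compute, at time $t=1$, the filament function of the given curve $\chi_1$. By hypothesis the curvature $c_1$ is $4\pi$-periodic and the torsion is $\tfrac{x}{2}$ modulo a $4\pi$-periodic $H^{3/2^+}$ function, so $\int_0^x \tau = \tfrac{x^2}{4} + (\text{$4\pi$-periodic part})$. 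Hence the filament function at $t=1$ takes the form $u_0(x) = e^{i\frac{x^2}{4}} f(x)$ with $f$ a $4\pi$-periodic function; I would check that the stated regularity of curvature and torsion places $f$ in $H^s(0,4\pi)$ for some $s\in(0,\tfrac12)$, which is exactly the hypothesis of Theorem \ref{wbl}.

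Next I would invoke Theorem \ref{wbl} to obtain the unique NLS solution $u(t)$ on $(0,1]$ with datum $u_0$ at $t=1$, in the functional framework \eqref{ansatzth}, blowing up at $t=0$ in the precise sense described there, and in particular having the explicit superposition form $u(t,x)=\sum_j A_j(t) e^{it\Delta}\delta_j$. The heart of the argument is then to transfer this solution back to the curve side: one reconstructs $\chi(t,\cdot)$ from $u(t,\cdot)$ by integrating the Frenet-type system associated to Hasimoto's transform (equivalently, solving for the tangent, normal and binormal vectors from the filament function and integrating the tangent in $x$). For $t\in(0,1]$ this produces a strong solution $\chi(t)$ of the binormal flow with $\chi(1)=\chi_1$; this is where I would need to carefully verify that the $H^s$ control of $w(t)$ from Theorem \ref{wbl} is enough regularity to make the reconstruction and the binormal flow equation rigorous away from $t=0$, and that $\chi$ extends as a weak solution across $t=0$.

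Finally, the singularity formation. The blow-up statement \eqref{blupth} shows that as $t\to 0^+$ the Fourier coefficients $A_j(t)$ converge in modulus to $|\alpha_j|$ but their phases diverge like $(|\alpha_j|^2 - 2\|f\|_{L^2}^2)\log t$, and $u(t,x)$ focalises at $t=0$ being a superposition of the profiles $e^{it\Delta}\delta_j$. On the curve side, each bump of mass concentrating at a point $x=j$ corresponds to the formation of a corner of $\chi(t,\cdot)$ at that point at time $t=0$; thus the several Dirac masses $\delta_j$ with $\alpha_j\neq 0$ produce several corner-singularities simultaneously. I would make this precise by comparing $\chi$ near each $x=j$ with the self-similar corner solutions studied in \cite{GRV}, whose filament functions are exactly of the single-Dirac type $\alpha_j e^{it\Delta}\delta_j$, and reading off the corner angle from $|\alpha_j|$.

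I expect the main obstacle to be the rigorous reconstruction of the curve $\chi$ from the filament function $u$ at the low regularity $s<\tfrac12$ afforded by Theorem \ref{wbl}, together with the verification that the several asymptotic corner profiles genuinely interact weakly enough that each $\delta_j$ with $\alpha_j\neq 0$ yields an honest corner of $\chi(0^+)$ rather than being smoothed out by the superposition. Controlling this requires using the spatial localization of the profiles $e^{it\Delta}\delta_j$ (which are concentrated near $x=j$ at small $t$) and the summability $\{\alpha_j\}\in l^{2,s}$, and then matching the local behaviour to the \cite{GRV} self-similar asymptotics to certify both the strong-solution property on $\mathbb{R}^*$ and the corner formation at $t=0$.
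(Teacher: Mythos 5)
There is a genuine gap, and it sits exactly where you flagged it: the reconstruction of the curve from the filament function. You read the hypothesis as placing $f$ in $H^s(0,4\pi)$ for some $s\in(0,\tfrac12)$ and then plan to apply Theorem \ref{wbl} at that regularity and rebuild $\chi$ directly, acknowledging that the low-regularity reconstruction is the ``main obstacle.'' The paper does not overcome that obstacle at $s<\tfrac12$; it avoids it. The hypothesis of Theorem \ref{thbf} is explicitly that the periodic part of the torsion data is $H^{\frac32^+}$, so the filament function is $u_0=e^{i\frac{x^2}{4}}f$ with $f\in H^{\frac32^+}$, and the entire content of Section 3 is to propagate this \emph{higher} regularity through the asymptotic analysis: Lemma \ref{controlenergy} uses the energy law \eqref{E}--\eqref{Ederivative} of the non-autonomous equation \eqref{NLSv} to get a uniform-in-time $l^{2,1}$ bound (this is needed because Lemma \ref{lemmawev} only covers $r<\tfrac12$), and Lemma \ref{lemmawevbis} then upgrades to a uniform $l^{2,\frac32^+}$ bound using the weaker multiplier estimate $|\varphi_{k,j_1,j_2,j_3}|\lesssim \max\{\langle k\rangle,\langle j_1\rangle,\langle j_2\rangle,\langle j_3\rangle\}^{1^+}$ from \cite{FoSe} in place of \eqref{phibound}. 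The conclusion is that the limit sequence $\{\alpha_j\}$ lies in $l^{2,\frac32^+}$, which is precisely the threshold at which the construction of \cite{BVAnnPDE} (binormal evolution of polygonal lines) produces a strong solution of the binormal flow on $\R^*$, weak on $\R$, with corners at the integers $j$ with $\alpha_j\neq 0$. Your final step of matching each $\delta_j$ to a corner via the self-similar solutions of \cite{GRV} is morally what happens inside \cite{BVAnnPDE}, but as a standalone argument at regularity $s<\tfrac12$ it is not available, and without the new $l^{2,1}$ and $l^{2,\frac32^+}$ persistence lemmas you have no way to place $\{\alpha_j\}$ in the class where any existing curve construction applies.

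In short: the missing idea is that the theorem is not a corollary of Theorem \ref{wbl} as stated, but of a higher-regularity refinement of it, and proving that refinement (uniform $l^{2,\frac32^+}$ control of $\{B_j(t)\}$ and hence of $\{\alpha_j\}$, via the energy functional and the modified multiplier bound) is the actual work of the proof.
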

We first note that the hypothesis on the data $\chi_1$ is satisfied by the family of self-similar solutions of the binormal flow, whose profile curvature is constant and whose torsion equal to $\frac x2$. In fluids vortex filaments dynamics having the behavior of the self-similar solutions is observed for delta-wing planes, see Figure 1. In superfluids this family of solutions were used by physicists studying the vortex lines reconnections and the Kelvin waves that emerge from, see Figure 2. We note also that if we want a blow-up after time $t_0$ then we should start with a curve with $4\pi t_0$-periodic curvature and torsion equal to $\frac x{2t_0}$ modulo a $H^{\frac 32^+} 4\pi t_0$-periodic function.

\begin{center}$\includegraphics[width=2in]{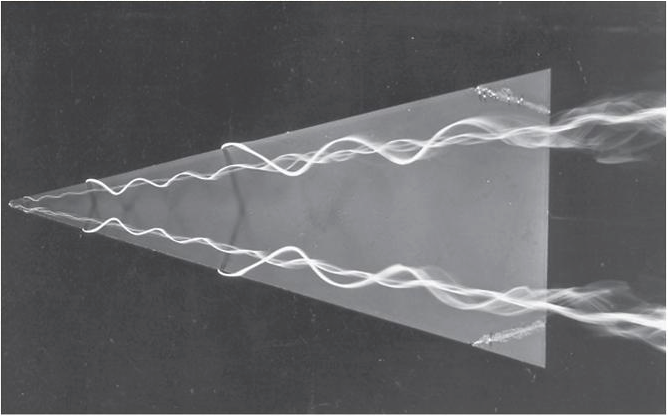}$\end{center}
{\footnotesize{Figure 1. Vortex filaments in a fluid encountering a triangular obstacle of the delta wing type, H. Werl\'e, ONERA 63.\\
}}\bigskip
\begin{center}
$\includegraphics[width=4in]{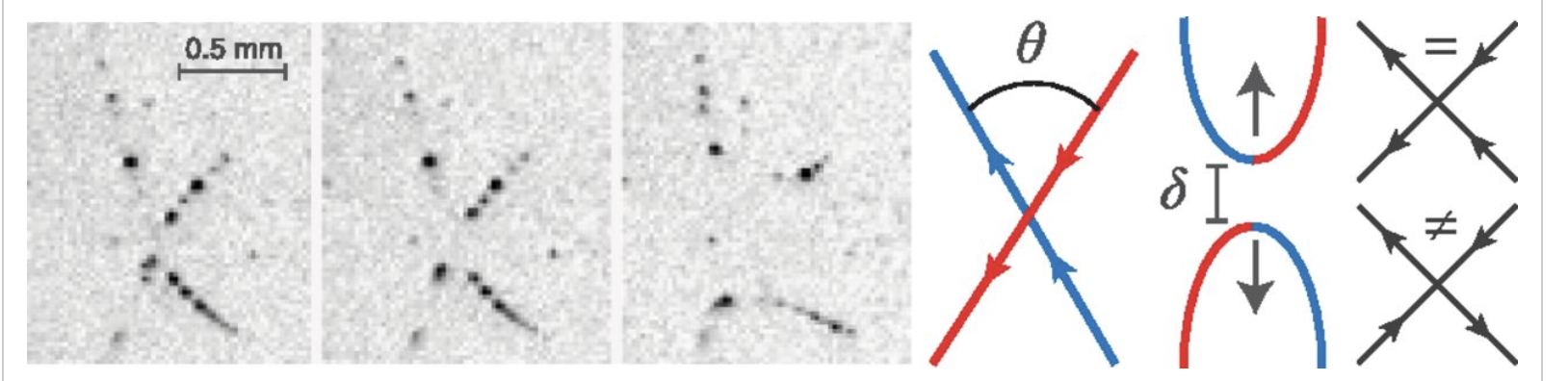}$ \end{center}
{\footnotesize{
Figure 2. Reconnection scaling in quantum fluids, E. Fonda, K. Sreenivasan and D. Lathrop, PNAS 19}}\\

The proof of Theorem \ref{thbf} relies first on a control in time of the $l^{2,\frac 32^+}$-norms of the sequences $\{A_j(t)\}_{j\in\mathbb Z}$ in Theorem \ref{wbl} provided that $f\in H^{\frac 32^+}$, and then on the proof of the persistence of this regularity in the limit sequence $\{\alpha_j\}_{j\in\mathbb Z}$. Finally, the construction in \cite{BVAnnPDE} for $\{\alpha_j\}_{j\in\mathbb Z}\in l^{2,\frac 32^+}$ allows to construct the solutions  in Theorem \ref{thbf}.

The paper is organized as follows: in the next section we give the proof of Theorem \ref{wbl} while the last section contains the proof of Theorem \ref{thbf}. The proof of Theorem \ref{soft_statement} is contained in the one of Theorem \ref{wbl}.

{\bf{Acknowledgements:}}  This research is partially supported as follows. VB is partially supported by the Institut Universitaire de France, by the French ANR project SingFlows. RL is supported by BERC program 2022-2025 and by MICINN (Spain) projects Severo Ochoa CEX2021-001142, PID2021-123034NB-I00 and by the Ramon y Cajal fellowship RYC2021-031981-I. NT is partially supported by the ANR project Smooth ANR-22-CE40-0017. LV is funded by MICINN (Spain) projects Severo Ochoa CEX2021-001142, and PID2021-126813NB-I00 (ERDF A way of making Europe), and by Eusko Jaurlaritza project IT1615-22 and BERC program. The authors would like to thank the referees for their careful reading and suggestions. 
\section{Proof of Theorem \ref{wbl}}
\subsection{The equations} 
To prove Theorem \ref{wbl} we look thus for solutions of \eqref{NLS} on $t>0$ of the type
\begin{equation}\label{ansatz}
u(t,x)=\sum_jA_j(t)\frac{e^{i\frac{(x-j)^2}{4t}}}{\sqrt{t}},
\end{equation}
with initial data 
\begin{equation}\label{id}
u(1,x)=e^{i\frac{x^2}{4}}f(x)=\sum_jA_j(1)e^{i\frac{(x-j)^2}{4}} \iff A_j(1)=\hat f(j)e^{-i\frac{j^2}{4}}.
\end{equation}

After pseudo-conformal transformation \eqref{pseudo} such a solution $u(t)$ on $0<t\leq 1$ turns into a solution $v(t)$ of the NLS on $1\leq t<\infty$:
\begin{equation}\label{NLSv}
iv_t+v_{xx}+\frac 1{t}|v|^2v=0,
\end{equation} 
 that is a $2\pi-$periodic function (modulo constants, that we avoid for the sake of the clearness of the presentation, and that do not affect the arguments and results) of the type:
\begin{equation}\label{vB}
v(t,x)=\sum_j B_j(t) e^{itj^2}e^{ixj},
\end{equation} 
where
\begin{equation}\label{ABrelation}
B_j(t)=\overline{A_j}(\frac 1t).
\end{equation} 
In particular the Fourier coefficients of $v(t)$ satisfy 
$$|\hat v(t,j)|=|A_j(1/t)|,$$ 
thus estimating Sobolev norms of $v(t)$ is equivalent to weighted estimates on the sequence $A_j(1/t)$. 
\begin{rem}
In particular from Theorem \ref{wbl} we obtain periodic NLS solutions $v$ with Fourier modes converging in modulus:
$$||\hat v(t,j)|^2-|\alpha_j|^2|\leq \frac Ct,$$
but with fluctuations of the modes.
\end{rem}

Also, we can note that as usually for questions concerning the long-time behavior of cubic NLS, we have the relation:
$$B_j(t)=\mathcal F(e^{-it\Delta}v(t))(j).$$\\
We denote then
$$B(t,x):=\sum_j B_j(t)e^{ijx}=e^{-it\Delta}v(t)(x),$$
and noting that we have the following expression for the Fourier transform in both time and space variable:
$$v(t,x)=\sum_j(e^{itj^2}B_j(t))e^{ijx}\Longrightarrow \hat v(\lambda,j)=\hat B_j(\lambda+j^2),$$
we introduce Bourgain's norms:
\begin{equation}\label{B}\|v\|_{X^{s,b}}:=\Big(\int \sum_j \langle j\rangle^{2s} \langle \lambda+j^2\rangle^{2b} |\widehat{v}(\lambda,j)|^2d\lambda\Big)^\frac 12=\Big(\int \sum_j \langle j\rangle^{2s} \langle \lambda\rangle^{2b} |\widehat{B}_{j}(\lambda)|^2d\lambda\Big)^\frac 12=:\|B\|_{H^{s,b}},\end{equation} 
and for localizations in time $f_\nu(t)= \eta_\nu(t) f(t)$ in $(\nu-1,\nu+2)$, valued $1$ on $[\nu,\nu+1]$:
\begin{equation}\label{Bloc}\|v\|_{X_\nu^{s,b}}:=\|v_\nu \|_{X^{s,b}}=\Big(\int \sum_j \langle j\rangle^{2s} \langle \lambda\rangle^{2b} |\widehat{B}_{j,\nu}(\lambda)|^2d\lambda\Big)^\frac 12=:\|B\|_{H^{s,b}_\nu}.
\end{equation}


Now we remark that $u(t)$ of ansatz \eqref{ansatz} is a solution of \eqref{NLS} if and only if the sequence $A(t)=\{A_j(t)\}_{j\in\mathbb Z}$ is solution of the system
$$i\partial_t A_k(t)=\frac{1}{ t}\sum_{(j_1,j_2,j_3)\in \Sigma_k}e^{-i\frac{k^2-j_1^2+j_2^2-j_3^2}{4t}}A_{j_1}(t)\overline{A_{j_2}(t)}A_{j_3}(t),$$
translating into the sequence $B(t)=\{B_j(t)\}_{j\in\mathbb Z}$ being a solution of the system (modulo constants)
\begin{equation}\label{Bjsyst}i\partial_t B_k(t)=\frac{1}{ t}\sum_{(j_1,j_2,j_3)\in \Sigma_k}e^{-it(k^2-j_1^2+j_2^2-j_3^2)}B_{j_1}(t)\overline{B_{j_2}(t)}B_{j_3}(t),\end{equation}
where
$$\Sigma_k:=\{(j_1,j_2,j_3),\, k-j_1+j_2-j_3=0\}.$$
Let us introduce the nonresonant set
$$NR_k=\{(j_1,j_2,j_3),\, k-j_1+j_2-j_3=0,\, k^2-j_1^2+j_2^2-j_3^2\neq 0\}.$$
First we note that if $k-j_1+j_2-j_3=0$ we have 
$$\omega_{k,j_1,j_2}:=k^2-j_1^2+j_2^2-j_3^2=(k-j_1)(j_1-j_2).$$ 
For any real function $a$, using a standard symmetrization argument, as $a(k)-a(j_1)+a(j_2)-a(j_3)$ vanishes on the resonant set, we have:
\begin{multline}\label{Bjcons}\partial_t \sum_k a(k)|B_k(t)|^2\\
=\frac{1}{2 ti}\sum_{k-j_1+j_2-j_3=0}(a(k)-a(j_1)+a(j_2)-a(j_3))e^{-it\omega_{k,j_1,j_2}}B_{j_1}(t)\overline{B_{j_2}(t)}B_{j_3}(t)\overline{B_k(t)}\\
=\frac{1}{2ti}\sum_{k;NR_k}(a(k)-a(j_1)+a(j_2)-a(j_3))e^{-it\omega_{k,j_1,j_2}}B_{j_1}(t)\overline{B_{j_2}(t)}B_{j_3}(t)\overline{B_k(t)}.
\end{multline}

Therefore by taking $a\equiv1$ we obtain that the system conserves the ``mass" :
\begin{equation}\label{mass}\sum_k|B_k(t)|^2=\|v(1)\|_{L^2(0,2\pi)}^2=\|f\|_{L^2(0,2\pi)}^2=:M.\end{equation}

In particular \eqref{Bjsyst} can be written as
\begin{equation}\label{Bjsystdev}i\partial_t B_k(t)=\frac{1}{ t}\sum_{NR_k}e^{-it\omega_{k,j_1,j_2}}B_{j_1}(t)\overline{B_{j_2}(t)}B_{j_3}(t)+\frac 1t\left(2M-|B_k(t)|^2\right)B_k(t).\end{equation}
At this point we are in a situation to apply a modified scattering technique for 1D cubic NLS as initiated by Ozawa (\cite{Oz}), Hayashi and Naumkin (\cite{HaNa}) and Carles (\cite{Ca}). More precisely, we will show that the first term in the right-hand-side of \eqref{Bjsystdev} is controllable by oscillatory integral techniques, while the second term will dictate the long time dynamics of \eqref{Bjsystdev}. Let us notice that in the proof of modified scattering for 1D cubic NLS in \cite{Oz} (see (2.4) in the revisited proof in \cite{KaPu}) the modulus of the perturbative term is absolutely integrable in time, while in here the perturbative term can be shown to be integrable in time only in the sense of oscillatory integrals.

\subsection{Construction \`a la Bourgain of the solution $v(t)$ on $[1,\infty)$}\label{ssectconstr}
In view of \eqref{id}, \eqref{vB} and \eqref{ABrelation}, we consider the initial condition 
$$b_j:=\overline{A_j}(1)=\overline{\hat f(j)e^{-ij^2}}.$$ 
Let
$$\eta\in\mathcal C^\infty_0(-1,2),\quad \eta(t)=1,\quad \forall t\in [0,1],$$
such that we have a partition of unity 
$$\frac12\sum_{\nu\in\mathbb N} \eta_\nu(t)=1,\quad \forall t\in [1,\infty),$$
with
$$\eta_\nu(t)=\eta (t-\nu).$$

For all $\nu\in\mathbb N^*$ we construct $\{B_{k}^\nu\}_{k\in\mathbb Z}$ recursively as follows, by using at each step Bourgain's method (\cite{B94}, see also \S 3.5.1 in \cite{ErTz}). First we construct $\{B_{k}^1\}_{k\in\mathbb Z}$ solution of:
$$i\partial_t B_{k}^1=\frac 1t \eta_1(t)\sum_{(j_1,j_2,j_3)\in \Sigma_k}e^{-it\omega_{k,j_1,j_2}}B_{j_1}^1\overline{B_{j_2}^1}B_{j_3}^1(t),\quad B_{k}^1(1)=b_k.$$
We note that $\{B_{k}^1\}_{k\in\mathbb Z}$ solves on $[1,2]$ the equation we are interested in:
$$
i\partial_t B_{k}^1=\frac 1t \sum_{(j_1,j_2,j_3)\in \Sigma_k}e^{-it\omega_{k,j_1,j_2}}B_{j_1}^1\overline{B_{j_2}^1}B_{j_3}^1(t).
$$
Next we construct $\{B_{k}^2\}_{k\in\mathbb Z}$ solution of 
$$i\partial_t B_{k}^2=\frac 1t \eta_2(t)\sum_{(j_1,j_2,j_3)\in \Sigma_k}e^{-it\omega_{k,j_1,j_2}}B_{j_1}^2\overline{B_{j_2}^2}B_{j_3}^2(t),\quad B_{k}^2(2)=B_{k}^1(2).$$
And so on. Then
$$B_k(t):=\left\{\begin{array}{c}B_{k}^1(t),\forall t\in [1,2],\\B_{k}^2(t),\forall t\in [2,3],\\\mbox{etc}.\end{array}\right.$$
solves \eqref{Bjsystdev} on $(1,\infty)$ with $B_k(1)=b_k$, therefore $v$ defined as in \eqref{vB} solves \eqref{NLSv} on $(0,1)$ with $v(1,x)=\sum_j b_j e^{ij^2}e^{ixj}$. In view of the choice of $b_j's$ we recover the initial data $v(1)=\overline{f}$. Therefore we have obtained solutions $u(t)$ on $t\in(0,1]$ of the type \eqref{ansatz}, with initial data \eqref{id}.\\

Also, arguing as in  \S 3.5.1 in \cite{ErTz} we have the following informations involving Bourgain's norms \eqref{B}-\eqref{Bloc}, for $0\leq s, \frac 12<b<\frac 58, \nu\in\mathbb N, t\in[\nu,\nu+1]$:
\begin{equation}\label{Best}
\|\{B_j(t)\}\|_{l^{2,s}}=\|v(t)\|_{H^s}\leq C\|v\|_{X^{s,b}_\nu}=C\|B\|_{H^{s,b}_\nu}\leq C \|v(\nu)\|_{H^s}=C\|\{B_j(\nu)\}\|_{l^{2,s}},
\end{equation}
where the constant $C$ is independent of $t$, $\nu$ and depends only of the $l^2$ norm of $\{b_k\}$.

\subsection{Extra-properties on $B_j$'s}
In order to determinate the large time behaviour of $B_j(t)$ we shall first obtain some large time controls. We start with a decay estimate of the Fourier transform in time of a localization of $\partial_tB_j(t)$.

\begin{lemma}\label{bounddert}
For any $\nu\in\mathbb N$, and $\eta_\nu$ a smooth cutoff supported in $[\nu,\nu+1]$, we have for any  $\epsilon>0$ and $\epsilon'>0$ the following estimate on the Fourier transform in time:
$$ |\mathcal F(\eta_\nu(\cdot)\partial_t B_k(\cdot))(\lambda)|\leq \frac {C(\epsilon',\|\{B_j(1)\}\|_{ l^{2}})\||\xi|^{1^+}\hat\eta_\nu(\xi)\|_{L^\infty}}{\nu\,\langle \lambda\rangle^{\min\{\frac \epsilon 2,(b-\frac 12)^-\}}} \|\{B_j(\nu)\}\|_{l^{2,\epsilon+2\epsilon'}},\quad\forall k\in\mathbb Z,\lambda\in\mathbb R,$$
\end{lemma}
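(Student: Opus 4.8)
The plan is to insert the evolution equation \eqref{Bjsyst} for $\partial_t B_k$ directly into the time-Fourier transform. Since the oscillatory factor $e^{-it\omega_{k,j_1,j_2}}$ shifts the Fourier variable, this gives
$$\mathcal F(\eta_\nu(\cdot)\partial_t B_k(\cdot))(\lambda)=\frac{1}{i}\sum_{(j_1,j_2,j_3)\in\Sigma_k}\mathcal F\!\left(\frac{\eta_\nu(t)}{t}\,B_{j_1}\overline{B_{j_2}}B_{j_3}\right)(\lambda+\omega_{k,j_1,j_2}),$$
with $\omega_{k,j_1,j_2}=(k-j_1)(j_1-j_2)$. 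On the support of $\eta_\nu$ one has $t\sim\nu$, so writing $\eta_\nu(t)/t$ as $\nu^{-1}$ times a bump of the same support produces the decay $1/\nu$; on the Fourier side this weight enters through $\widehat{\eta_\nu}$, and the bound $|\widehat{\eta_\nu}(\xi)|\lesssim \||\xi|^{1^+}\widehat{\eta_\nu}\|_{L^\infty}\langle\xi\rangle^{-1^+}$ is exactly what makes the associated multiplier integrable and accounts for the factor $\||\xi|^{1^+}\widehat\eta_\nu\|_{L^\infty}$.

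Next I would express each summand through the convolution, in the time-frequency variable, of the three transforms $\widehat{B_{j_1}}$, $\overline{\widehat{B_{j_2}}}$, $\widehat{B_{j_3}}$ against the Fourier transform of the multiplier $\eta_\nu(t)/t$, and estimate it pointwise in $\lambda$ using the localized Bourgain norms of \eqref{B}--\eqref{Bloc}. Since $b>\tfrac12$, the weights $\langle\mu\rangle^{-b}$ are square-integrable, so a Cauchy--Schwarz in the convolution variables turns the trilinear expression into a product of three $H^{\cdot,b}_\nu$ norms, at the price of summable factors that will later absorb the sum over $(j_1,j_2)$.

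The decay in $\lambda$ I would extract by splitting the sum over $\Sigma_k$ according to the size of the resonance relative to $\lambda$. In the low-modulation range $|\omega_{k,j_1,j_2}|\le \tfrac12\langle\lambda\rangle$ (which in particular contains the resonant terms $\omega_{k,j_1,j_2}=0$) one has $|\lambda+\omega_{k,j_1,j_2}|\ge\tfrac12\langle\lambda\rangle$, so the convolution is evaluated at high time-frequency and the factor $\langle\lambda+\omega_{k,j_1,j_2}\rangle^{-(b-\frac12)^-}$ coming from the $b$-regularity yields $\langle\lambda\rangle^{-(b-\frac12)^-}$. In the high-modulation range $|\omega_{k,j_1,j_2}|\ge\tfrac12\langle\lambda\rangle$ the factorization $\omega_{k,j_1,j_2}=(k-j_1)(j_1-j_2)$ forces $\max(|k-j_1|,|j_1-j_2|)\gtrsim\langle\lambda\rangle^{1/2}$; I would trade this large frequency gap for the decay $\langle\lambda\rangle^{-\epsilon/2}$ by spending $\epsilon$ powers of the weights $\langle j\rangle$. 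Taking the worse of the two gains produces the exponent $\min\{\epsilon/2,(b-\tfrac12)^-\}$.

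Finally I would carry out the summation over $(j_1,j_2)\in\Sigma_k$ by Cauchy--Schwarz, using the two remaining powers $\epsilon'$ of the weight (one per free index) to make the double series converge; two of the three factors are controlled by the conserved mass \eqref{mass}, hence by $\|\{B_j(1)\}\|_{l^2}$, giving the constant $C(\epsilon',\|\{B_j(1)\}\|_{l^2})$, while the third keeps the weight $\langle j\rangle^{\epsilon+2\epsilon'}$ and, through the local-in-time bound \eqref{Best}, is estimated by $\|\{B_j(\nu)\}\|_{l^{2,\epsilon+2\epsilon'}}$. The main obstacle I anticipate is the high-modulation regime: one must extract the gain $\langle\lambda\rangle^{-\epsilon/2}$ out of $\omega_{k,j_1,j_2}=(k-j_1)(j_1-j_2)$ while still summing the double series over $(j_1,j_2)$, so the allocation of the available weights between the $\lambda$-decay ($\epsilon$) and the convergence of the sum ($2\epsilon'$) has to be balanced carefully, and it is precisely here that the oscillatory, nonresonant structure together with the constraint $\tfrac12<b<\tfrac58$ is used.
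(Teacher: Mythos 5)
Your overall architecture matches the paper's: pass to the Fourier transform in time, work with the localized Bourgain norms $H^{s,b}_\nu$ with $b>\frac 12$, extract the factor $\frac 1\nu$ from $\eta_\nu(t)/t$, and obtain the $\lambda$-decay by splitting according to whether the resonance $\omega_{k,j_1,j_2}=(k-j_1)(j_1-j_2)$ is small or large compared with $\langle\lambda\rangle$. Your identification of the role of $\||\xi|^{1^+}\hat\eta_\nu(\xi)\|_{L^\infty}$ and of the exponent $\min\{\frac\epsilon2,(b-\frac12)^-\}$ is also correct.

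The gap is in the summation over $(j_1,j_2)\in\Sigma_k$. You propose to close the double series by Cauchy--Schwarz ``using the two remaining powers $\epsilon'$ of the weight (one per free index)''. That cannot work: $\sum_j\langle j\rangle^{-2\epsilon'}$ diverges for $\epsilon'\le\frac12$, while the lemma is claimed for every $\epsilon'>0$; more structurally, a Cauchy--Schwarz on a trilinear convolution leaves one $l^1$-type sum, which would cost a weight $\langle j\rangle^{\frac12^+}$, not $\langle j\rangle^{\epsilon'}$. The paper's mechanism is different and is the crux of the proof: group the nonresonant pairs by the value $m=(k-j_1)(j_1-j_2)$ of the resonance function, use the divisor bound $\#\Lambda_{k,m}\le C(\epsilon')|m|^{\epsilon'}$ to control each fiber at the cost of a factor $|m|^{\epsilon'}$ (then absorbed into the $j$-weights via \eqref{estepsilon}), and only then sum over the single integer variable $m$ against the decay of $\widehat{\tilde\eta}_\nu(\lambda-\lambda_1+\lambda_2-\lambda_3+m)$ --- this is exactly where $\||\xi|^{1^+}\hat\eta_\nu(\xi)\|_{L^\infty}$ and a further Cauchy--Schwarz in $m$ enter. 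Without the divisor count there is no way to pass from summability in $m$ to summability in $(j_1,j_2)$. Relatedly, you insert the full sum \eqref{Bjsyst}, so the resonant fiber $\omega_{k,j_1,j_2}=0$ (which is infinite: $j_1=k$ or $j_1=j_2$) sits inside your low-modulation case with no decay available to sum it; the paper instead isolates it as the term $\frac1t\left(2M-|B_k(t)|^2\right)B_k(t)$ of \eqref{Bjsystdev}, which is already summed by mass conservation and is then handled by a single integration by parts producing the factor $\frac1{\nu\lambda}$. Both ingredients are needed to turn your outline into a proof.
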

\begin{proof}
We denote
$$\Lambda_{k,m}=\{(j_1,j_2)\in\mathbb Z^2, (k-j_1)(j_1-j_2)=m\},$$
and we have:
\begin{equation}\label{Fpartialtest}\mathcal F(\eta_\nu(\cdot)\partial_t B_k(\cdot))(\lambda)\end{equation}
$$=\int e^{it\lambda}\eta_\nu(t)\frac 1t\sum_{m\neq 0}\sum_{j_1,j_2\in\Lambda_{k,m}}e^{-itm}(B_{j_1}\overline{B}_{j_2}B_{k-j_1+j_2})(t)dt$$
$$+\int e^{it\lambda}\eta_\nu(t)\frac 1t\left(2M-|B_k(t)|^2\right)B_k(t)dt.$$
In the following we shall estimate the first term\footnote{This term is controllable directly for $\{B_j\}\in l^{2,s}$ with $s>\frac 12$, but not for lower regularity.} , that we denote by $F(\nu,k,\lambda)$:
$$\frac 1\nu \int \int \sum_{m\neq 0} e^{-it(\lambda-\lambda_1+\lambda_2-\lambda_3+m)}\tilde\eta_\nu(t)dt \sum_{j_1,j_2\in\Lambda_{k,m}}\hat B_{j_1,\nu}(\lambda_1)\widehat {\overline{B}}_{j_2,\nu}(\lambda_2)\hat B_{k-j_1+j_2,\nu}(\lambda_3) d\lambda_1d\lambda_2d\lambda_3$$
$$=\frac 1\nu \int\sum_{m\neq 0}\widehat{\tilde \eta}_\nu(\lambda-\lambda_1+\lambda_2-\lambda_3+m) \sum_{j_1,j_2\in\Lambda_{k,m}}\hat B_{j_1,\nu}(\lambda_1)\widehat{\overline{ B}}_{j_2,\nu}(\lambda_2)\hat B_{k-j_1+j_2,\nu}(\lambda_3)d\lambda_1d\lambda_2d\lambda_3,$$
where $\tilde \eta_\nu(t)=\frac \nu t\eta_{\nu}(t)$ and preserve the same estimates as $\eta_\nu$ at Fourier level (see \cite{BrV}). 
We perform Cauchy-Schwarz in $j_1,j_2$ and use the divisor bounds in $\mathbb Z$ (which is an Euclidean division domain), for any $\epsilon'>0$:
$$\# \Lambda_{k,m}\leq C(\epsilon')|m|^{\epsilon'}=C(\epsilon')|(k-j_1)(j_1-j_2)|^{\epsilon'},\quad\forall j_1,j_2\in\Lambda_{k,m},$$ 
to get
$$|F(\nu,k,\lambda)|\leq \frac {C({\epsilon'})}\nu \int\sum_{m\neq 0}|\widehat{\tilde \eta}_\nu(\lambda-\lambda_1+\lambda_2-\lambda_3+m)| $$
$$\times\Big(\sum_{j_1,j_2\in\Lambda_{k,m}}|(k-j_1)(j_1-j_2)|^{2{\epsilon'}}|\hat B_{j_1,\nu}(\lambda_1)\widehat{\overline{ B}}_{j_2,\nu}(\lambda_2)\hat B_{k-j_1+j_2,\nu}(\lambda_3)|^2\Big)^\frac 12d\lambda_1d\lambda_2d\lambda_3.$$
Now we perform Cauchy-Schwarz in $m$, for $N\in \mathbb N$:
$$|F(\nu,k,\lambda)|\leq\frac {C(\epsilon')}\nu \int\Big( \sum_{m\neq 0}|\widehat{\tilde \eta}_\nu(\lambda-\lambda_1+\lambda_2-\lambda_3+m)|^2\langle \lambda-\lambda_1+\lambda_2-\lambda_3+m\rangle ^N \Big)^\frac 12$$
$$\times \Big(\sum_{m\neq 0,j_1,j_2\in\Lambda_{k,m}}\frac{|(k-j_1)(j_1-j_2)|^{2\epsilon'}}{\langle \lambda-\lambda_1+\lambda_2-\lambda_3+m\rangle ^N}|\hat B_{j_1,\nu}(\lambda_1)\widehat{\overline{ B}}_{j_2,\nu}(\lambda_2)\hat B_{k-j_1+j_2,\nu}(\lambda_3)|^2\Big)^\frac 12d\lambda_1d\lambda_2d\lambda_3$$
$$\leq  \frac {C(\epsilon')\||\xi|^{\frac {(N+1)^+}2}\hat\eta_\nu(\xi)\|_{L^\infty}}\nu$$
$$\times \int\Big(\sum_{m\neq 0,j_1,j_2\in\Lambda_{k,m}}\frac{|(k-j_1)(j_1-j_2)|^{2\epsilon'}}{\langle \lambda-\lambda_1+\lambda_2-\lambda_3+m\rangle ^N}|\hat B_{j_1,\nu}(\lambda_1)\widehat{\overline{ B}}_{j_2,\nu}(\lambda_2)\hat B_{k-j_1+j_2,\nu}(\lambda_3)|^2\Big)^\frac 12d\lambda_1d\lambda_2d\lambda_3.$$
Eventually we perform Cauchy-Schwarz in $\lambda_1,\lambda_2,\lambda_3$ to get using $b>\frac 12$:
$$|F(\nu,k,\lambda)|\leq \frac {C(\epsilon')\||\xi|^{\frac {(N+1)^+}2}\hat\eta_\nu(\xi)\|_{L^\infty}}\nu \Big(\int \sum_{m\neq 0, j_1,j_2\in\Lambda_{k,m}}\frac{|(k-j_1)(j_1-j_2)|^{2\epsilon'}}{\langle \lambda-\lambda_1+\lambda_2-\lambda_3+(k-j_1)(j_1-j_2)\rangle ^N}\times$$
$$\times   \langle\lambda_1\rangle^{1^+}|\hat B_{j_1,\nu}(\lambda_1)|^2  \langle\lambda_2\rangle^{1^+} |\hat B_{j_2,\nu}(\lambda_2)|^2 \langle\lambda_3\rangle^{1^+} |\hat B_{k-j_1+j_2,\nu}(\lambda_3)|^2d\lambda_1d\lambda_2d\lambda_3\Big)^\frac 12.$$
Even without taking advantage of the denominator, we get the decay in $\nu$:
$$|F(\nu,k,\lambda)|\leq \frac {C(\epsilon')\||\xi|^{\frac {1}2^+}\hat\eta_\nu(\xi)\|_{L^\infty}}\nu \|B\|_{H^{0,\frac 12^+}_\nu}^2\|B\|_{H^{2\epsilon',\frac 12^+}_\nu} $$
$$\leq \frac {C(\epsilon')\||\xi|^{\frac {1}2^+}\hat\eta_\nu(\xi)\|_{L^\infty}}\nu C(\|\{B_j(1)\}\|_{ l^{2}})\|\{B_j(\nu)\}\|_{l^{2,2\epsilon'}},$$
where in the last line we have used estimate \eqref{Best}, the ``mass" conservation \eqref{mass} and 
\begin{equation}\label{estepsilon}|(k-j_1)(j_1-j_2)|^{2\epsilon'}\leq \max\{\langle j_1\rangle, \langle j_2\rangle,\langle k-j_1+j_2\rangle\}^{4\epsilon'}.\end{equation}
To get the decay in $\lambda$ we argue as follows. By taking $N=1$ (so the upper-bound will involve $\||\xi|^{1^+}\hat\eta_\nu(\xi)\|_{L^\infty})$ we get directly $\langle \lambda\rangle^{-\frac 12}$ except in the case 
$$|\lambda-(\lambda_1-\lambda_2+\lambda_3-(k-j_1)(j_1-j_2))|\leq\frac{|\lambda|}2.$$ 
If $|\lambda_1-\lambda_2+\lambda_3|\geq \frac{|\lambda|}4$ then $|\lambda_j|\geq \frac{|\lambda|}{12}$ at least for one $j\in\{1,2,3\}$, and in this case we get $\frac 1{\langle\lambda\rangle^{(b-\frac 12)^-}}$ decay by using the $H_\nu^{0,b}$ and $H_\nu^{2\epsilon',b}$ norms. Eventually, we are left with the case 
$$|\lambda+(k-j_1)(j_1-j_2)|\leq\frac 34|\lambda|\Longrightarrow |(k-j_1)(j_1-j_2)|\geq \frac 14|\lambda|.$$ 
Then we get $\frac 1{\langle\lambda\rangle^{\frac{\epsilon}2}}$ decay with a $\|\{B_j(\nu)\}\|_{l^{2,\epsilon+2\epsilon'}}$ bound by using \eqref{estepsilon} with $\epsilon$ instead of $2\epsilon'$.

We note here that the second term in \eqref{Fpartialtest} can be treated by one integration by parts which yields the factor $\frac 1{\nu \lambda}$, then in a similarly way as above for $F(\nu,k,\lambda)$. As $(b-\frac 12)^-<1$ the analysis of the second term  in \eqref{Fpartialtest} is completed and the Lemma follows. 

\end{proof}
\begin{rem}We do not get an upper-bound on $\mathcal F(\eta_\nu(t)\partial_t B_k(t))(\lambda)$ by working with $B_j$'s just in $l^2$.
\end{rem}

Now we will obtain a control in time the weighted norms of $\{B_j(t)\}$ as follows.
\begin{lemma}\label{lemmawev}
For any $t\geq 1, r \in[0,\frac 12)$ we have:
$$\|\{B_j(t)\}\|_{l^{2,r}}\leq C( r,\|\{B_j(1)\}\|_{ l^{2,r}}).$$
\end{lemma}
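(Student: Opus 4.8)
The plan is to run an energy estimate on the weighted norm, differentiating it in time through the symmetrized identity \eqref{Bjcons} and extracting from the oscillatory phases $e^{-it\omega_{k,j_1,j_2}}$ enough decay in $t$ to make the resulting time integral converge. Write $a(k)=\langle k\rangle^{2r}$ and set
$$g(t):=\sum_k a(k)|B_k(t)|^2=\|\{B_j(t)\}\|_{l^{2,r}}^2,$$
which is finite for each fixed $t$ by \eqref{Best} (used on $[1,\nu+1]$); the goal is to bound $\sup_{t\ge1}g(t)$ by a constant depending only on $r$ and $g(1)$. The first step is the multiplier bound
$$|a(k)-a(j_1)+a(j_2)-a(j_3)|\le C(r)\,|\omega_{k,j_1,j_2}|,\qquad (j_1,j_2,j_3)\in\Sigma_k.$$
Since $k-j_1+j_2-j_3=0$, setting $h=k-j_1=j_3-j_2$ the left-hand side is the second difference $[\phi(j_1+h)-\phi(j_1)]-[\phi(j_2+h)-\phi(j_2)]$ with $\phi=\langle\cdot\rangle^{2r}$; two applications of the mean value theorem bound it by $|h|\,|j_1-j_2|\,\sup|\phi''|=|\omega_{k,j_1,j_2}|\,\sup|\phi''|$, and $\sup|\phi''|\le C(r)$ precisely because $2r<1$. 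This is the regime in which $l^{2,r}\not\hookrightarrow l^1$ and pointwise control is unavailable, complementary to the range $r\ge\frac12$ covered by Lemma \ref{lemmawevbis}.

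Integrating \eqref{Bjcons} then gives $g(T)-g(1)$ as the time integral of $\tfrac1{2ti}\sum_{k;NR_k}(a(k)-a(j_1)+a(j_2)-a(j_3))e^{-it\omega_{k,j_1,j_2}}B_{j_1}\overline{B_{j_2}}B_{j_3}\overline{B_k}$. The naive estimate loses: the factor $\tfrac1t$ produces an increment of size $O(1/\nu)$ on $[\nu,\nu+1]$, and $\sum_\nu 1/\nu$ diverges, so the point is to gain an extra power of $\nu$ from the oscillation. To this end I would localize in time on the unit intervals via the partition $\tfrac12\sum_\nu\eta_\nu=1$ and integrate by parts in $t$ using $e^{-it\omega}=-\tfrac1{i\omega}\partial_t e^{-it\omega}$. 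The factor $1/\omega$ absorbs the numerator into the \emph{bounded} symbol $\tilde N:=(a(k)-a(j_1)+a(j_2)-a(j_3))/\omega$, $|\tilde N|\le C(r)$, while $\partial_t$ falls either on $\tfrac1t$, producing the time-integrable weight $\tfrac1{t^2}$, or on the product $\Phi=B_{j_1}\overline{B_{j_2}}B_{j_3}\overline{B_k}$, producing via \eqref{Bjsystdev} terms with an extra $\tfrac1t$, plus boundary contributions at the endpoints.

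The resulting time-integrated multilinear forms I would then estimate in Bourgain's framework rather than pointwise in $t$. Since the weight has now been absorbed into the bounded symbol $\tilde N$, each form reduces to an unweighted spacetime integral, and for the principal term this reads
$$\Big|\int\frac{\eta_\nu(t)}{t^2}\sum_{k;NR_k}\tilde N\,e^{-it\omega}\Phi\,dt\Big|\le\frac{C(r)}{\nu^2}\int_{\nu-1}^{\nu+2}\|v(t)\|_{L^4_x}^4\,dt\le\frac{C(r)}{\nu^2}\|v\|_{X^{0,b}_\nu}^4\le\frac{C(r)}{\nu^2}M^2,$$
using the $L^4_{t,x}$ Strichartz embedding $\|v\|_{L^4}\lesssim\|v\|_{X^{0,3/8}}$, the local bound \eqref{Best} with $s=0$, and the mass conservation \eqref{mass}. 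This is summable in $\nu$ and controlled by the mass alone. The terms in which $\partial_t$ hit a factor $B$ carry no leftover weight, so the $k$-uniform decay of Lemma \ref{bounddert} is affordable there, and combined with the extra $\tfrac1t$ it again yields an $O(\nu^{-2})$ contribution (at worst times $g(\nu)^{1/2}$). Summing over $\nu$, additively for the mass-controlled pieces and through a convergent product / Gronwall argument for any piece proportional to $g(\nu)$, gives $\sup_{t\ge1}g(t)\le C(r,g(1))$, which is the claim.

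The main obstacle is exactly this gain of summability: the $\tfrac1t$ from the non-autonomous nonlinearity is borderline non-integrable, and one must upgrade it to $\tfrac1{t^2}$ using the nonresonance $\omega\ne0$ together with $|a(k)-a(j_1)+a(j_2)-a(j_3)|\lesssim|\omega|$. Because $r<\frac12$, the multilinear forms produced cannot be closed by crude pointwise-in-time bounds and genuinely require the $L^4$ Strichartz / $X^{s,b}$ machinery; the delicate bookkeeping is to verify that after symmetrization and integration by parts every such form is bounded by the mass (and not by higher norms of $B$), and to control the boundary terms generated by the integration by parts so that they too sum to something finite.
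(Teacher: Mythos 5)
Your skeleton is the same as the paper's: symmetrize via \eqref{Bjcons}, divide by the nonresonant phase $\omega_{k,j_1,j_2}$ (equivalently integrate by parts in $t$) to upgrade $\tfrac1t$ to $\tfrac1{t^2}$, and estimate the resulting multilinear forms with $X^{s,b}$ localizations on unit intervals. But there are two genuine gaps. First, your multiplier bound is too weak. The double mean value theorem only gives $|\tilde N|=|\varphi_{k,j_1,j_2,j_3}|\le C(r)$, and a merely bounded, non-product symbol on the hyperplane $k=j_1-j_2+j_3$ does not act boundedly on the $L^4$ form: the identity $\sum_{k;\Sigma_k}e^{-it\omega}B_{j_1}\overline{B_{j_2}}B_{j_3}\overline{B_k}=\tfrac1{2\pi}\|v(t)\|_{L^4_x}^4$ holds only without the symbol, and once you insert $\tilde N$ the only crude bound is $\sum|B_{j_1}||B_{j_2}||B_{j_3}||B_k|$, which (the time oscillations having been destroyed) is controlled by $\|B\|_{l^{2,1/4}}^4$, not by the mass; so your display $|\ldots|\le C(r)\nu^{-2}\|v\|_{L^4_x}^4\le C(r)\nu^{-2}M^2$ is unjustified at both inequalities. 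The paper's Lemma \ref{lemmabound} proves the strictly stronger \emph{decaying} bound $|\varphi|\le C/\max\{\langle k\rangle,\langle j_1\rangle,\langle j_2\rangle,\langle j_3\rangle\}$ for $r<\tfrac12$ (this is where the restriction on $r$ really pays off), and it is this decay, distributed as $\langle j_1\rangle^{-\frac12^-}\langle k\rangle^{-\frac12^+}$ in a Cauchy--Schwarz argument, that makes the quadrilinear sum converge — and even then the outcome \eqref{phiboundsum} costs $\|B\|_{l^2}^3\|B\|_{l^{2,0^+}}$, slightly more than mass, which must subsequently be absorbed into the left-hand side.

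Second, the term where $\partial_t$ falls on a factor $B$ is the heart of the proof and cannot be dispatched by citing Lemma \ref{bounddert}: that lemma's $k$-uniform pointwise bound has only $\langle\lambda\rangle^{-0^+}$ decay (not integrable in $\lambda$) and leaves you with a free sum over $k$ weighted by $\langle k\rangle^{-1}$ against a constant rather than an $l^2$ sequence, which diverges. (Lemma \ref{bounddert} is usable in Lemma \ref{propas} only because an extra factor $1/\omega$ is available there.) The paper instead substitutes \eqref{Bjsyst} to obtain a genuine $6$-linear form $I_\nu$, splits according to whether the largest frequency sits among $\{j_4,j_5,j_6\}$ or among $\{j_4-j_5+j_6,j_1,j_2,j_3\}$, uses $|\varphi|\lesssim1$ in the first case (paying an $X^{6\varepsilon,b}_\nu$, i.e.\ $l^{2,6\varepsilon}$, norm) and the full decay \eqref{mfkdlsfgmdksldkgj} in the second, and closes with $L^{6\pm\varepsilon'}_{\tau,x}$ Strichartz embeddings. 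Your proposal would need to supply this estimate; as written it does not close.
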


\begin{proof}

The case $r=0$ follows by mass conservation \eqref{mass}. By using \eqref{Bjcons} and an integration by parts we have
\begin{equation}\label{diffr}\sum_k \langle k\rangle^{2r}|B_k(t)|^2-\sum_k \langle k\rangle^{2r}|B_k(1)|^2\end{equation}
$$=\int_1^t \sum_{k;NR_k}(\langle k\rangle ^{2r}-\langle j_1\rangle ^{2r}+\langle j_2\rangle ^{2r}-\langle j_3\rangle ^{2r})e^{-i\tau (k^2-j_1^2+j_2^2-j_3^2)}B_{j_1}\overline{B_{j_2}}B_{j_3}\overline{B_{k}}(\tau)\frac{d\tau}{\tau}$$
$$\leq C\sum_{k;NR_k}|\varphi_{k,j_1,j_2,j_3}B_{j_1}\overline{B_{j_2}}B_{j_3}\overline{B_{k}}(t)\frac 1t|+C\sum_{k;NR_k}|\varphi_{k,j_1,j_2,j_3}B_{j_1}\overline{B_{j_2}}B_{j_3}\overline{B_{k}}(1)|$$
$$+C\int_1^t \sum_{k;NR_k}|\varphi_{k,j_1,j_2,j_3}B_{j_1}\overline{B_{j_2}}B_{j_3}\overline{B_{k}}(\tau)|\frac{d\tau}{\tau^2}$$
$$+C\Big|\int_1^t \sum_{k;NR_k}\varphi_{k,j_1,j_2,j_3}e^{i\tau  (j_1^2-j_2^2+j_3^2-k^2)}B_{j_1}\overline{B_{j_2}}B_{j_3}\overline{\partial_{\tau} B_{k}}(\tau)\frac{d\tau}{\tau}\Big|,$$
where 
$$\varphi_{k,j_1,j_2,j_3}:=\frac{\langle k\rangle ^{2r}-\langle j_1\rangle ^{2r}+\langle j_2\rangle ^{2r}-\langle j_3\rangle ^{2r}}{k^2-j_1^2+j_2^2-j_3^2}.$$
We aim to prove that  for any $t\geq 1$ and $r\in (0,\frac 12)$ we have:
\begin{equation}\label{neweq}\sum_k \langle k\rangle^{2r}|B_k(t)|^2\leq \sum_k \langle k\rangle^{2r}|B_k(1)|^2+ C(r,\|\{B_j(1)\}\|_{l^2})\sup_{\tau\in[1,t]}\|\{B_j(\tau)\}\|_{l^{2,r}}^{0^+},\end{equation}
so by resorbing the last terms in the left-hand-side we obtain the Lemma for $r\in (0,\frac 12)$:
$$\sum_k \langle k\rangle^{2r}|B_k(t)|^2\leq C(r,\|\{B_j(1)\}\|_{l^2})\sum_k \langle k\rangle^{2r}|B_k(1)|^2.$$

To get \eqref{neweq} we shall use the following technical lemma (estimates in the cases $r\geq \frac 12$ were obtained in Lemma 2.5 in \cite{FoSe}).

\begin{lemma}\label{lemmabound}
For $r<\frac 12$ we have
\begin{equation}\label{phibound}
 |\varphi_{k,j_1,j_2,j_3}|\leq \frac C{\max\{\langle k\rangle,\langle j_1\rangle,\langle j_2\rangle,\langle j_3\rangle\}},\quad \forall  k, (j_1,j_2,j_3)\in NR_k.
\end{equation}
As a consequence, for two sequences $\{N_j\}$ and $\{P_j\}$ of positive numbers we have
\begin{equation}\label{phiboundsum}
\sum_{k;(j_1,j_2,j_3)\in NR_k} |\varphi_{k,j_1,j_2,j_3}|N_{j_1}N_{j_2}N_{j_3}P_k\leq C\|N\|_{l^2}^2 \|N\|_{l^{2,0^+}}\|P\|_{l^2}.
\end{equation}
\end{lemma}

\begin{proof}
By symmetry in the $k,j_1,j_2,j_3$ variables, to get \eqref{phibound} it is enough to get 
$$\varphi_{k,j_1,j_2,j_3}\leq \frac C{\langle k\rangle},\quad \forall  k, (j_1,j_2,j_3)\in NR_k.$$
To get this estimate we shall decompose the summation in $k,j_1,j_2,j_3$ in two regions as follows.\\

{\em{a) Case $|k|\lesssim \max\{  |k-j_1|, |j_1-j_2|\}$}.} As $r<\frac 12$ we have $|(\langle x\rangle^{2r})'|=|2rx\langle x\rangle^{2r-1}|\leq C\langle x\rangle^{2r-1}$ and the mean value theorem (used in two manners: on $\langle k\rangle ^{2r}-\langle j_1\rangle ^{2r}$ and $\langle j_2\rangle ^{2r}-\langle j_3\rangle ^{2r}$ to get $|j_1-j_2|$-decay, as well as on $\langle k\rangle ^{2r}-\langle j_3\rangle ^{2r}$ and $\langle j_1\rangle ^{2r}-\langle j_2\rangle ^{2r}$ to get $|k-j_1|-$decay) implies that
$$
\varphi_{k,j_1,j_2,j_3}\leq \frac {C}{\max\{|k-j_1|,|j_1-j_2|\}\}}\leq \frac{C}{\langle k\rangle}.
$$

{\em{b) Case $|k|\gg\max\{ |k-j_1|, |j_1-j_2|\}$}.} In this case $|k|\lesssim |j_1|$, $k$ and $j_1$ have same sign, then also $|k|\lesssim |j_2|$, and $j_1$ and $j_2$ have same sign, and we also directly get $|k|\lesssim |j_3|$, and $k$ and $j_3$ have same sign.
As $|(\langle x\rangle^{2r})''|\leq C\langle x\rangle^{2r-2}$, applying twice the mean value theorem, which yields at the numerator  the double integral $\int_{j_1}^k\int_{y}^{y-j_1+j_2}(\langle x\rangle^{2r})''dxdy$, whose all boundary values are of same sign, we get that
$$
\varphi_{k,j_1,j_2,j_3}\leq \frac {C}{ \min\{\langle k\rangle, \langle j_1\rangle,\langle j_2\rangle,\langle j_3\rangle\}^{2-2r}}\leq \frac{C}{\langle k\rangle}.
$$

Estimate \eqref{phiboundsum} follows using \eqref{phibound} and Cauchy-Schwarz:
$$\sum_{k;(j_1,j_2,j_3)\in NR_k} |\varphi_{k,j_1,j_2,j_3}|N_{j_1}N_{j_2}N_{j_3}P_k$$
$$\leq \sum_{k,j_1}\frac{C}{\langle j_1\rangle^{\frac 12^-}\langle k\rangle ^{\frac 12^+}} N_{j_1}P_k \sum_{j_2\neq 0} (N_{j_2}^2+N_{k-j_1+j_2}^2)\leq C\|N\|_{l^2}^2\|N\|_{l^{2,0^+}}\|P\|_{l^2}.$$

\end{proof}
By using Lemma \ref{lemmabound} we can control as wanted in \eqref{neweq} the first three terms in \eqref{diffr}, while for the last term we use the expression of $\partial_\tau B_j$ from \eqref{Bjsyst} to get: 
\begin{equation}\label{neweqbis}
\sum_k \langle k\rangle^{2r}|B_k(t)|^2\leq \sum_k \langle k\rangle^{2r}|B_k(1)|^2+ C(r,\|\{B_j(1)\}\|_{l^2})\sup_{\tau\in [1,t]}\|\{B_j(\tau)\}\|_{l^{2,r}}^{0^+}
\end{equation}
$$+C\sum_{0\leq \nu\leq t}\Big|\int_1^t \sum_{N_j}\varphi_{j_4-j_5+j_6,j_1,j_2,j_3}e^{i\tau  (j_1^2-j_2^2+j_3^2-j_4^2+j_5^2-j_6^2)}B_{j_1}\overline{B_{j_2}}B_{j_3}\overline{B_{j_4}}B_{j_5}\overline{B_{j_6}}(\tau)\frac{d\tau}{\tau^2}\Big|,$$
where $$N_j:=\{(j_1,j_2,j_3,j_4,j_5,j_6),\, j_1-j_2+j_3-j_4+j_5-j_6=0,\, j_1^2-j_2^2+j_3^2-(j_4-j_5+j_6)^2\neq 0\}.$$ 
We are left with the last term, that by using the notation $v(\tau,x)=\sum_j e^{ijx+i\tau j^2}B_j(\tau)$ from \S 2.1, the partition of unity in time of $[1,\infty)$ from \S 2.2 with $\eta_{\nu}$ supported in $(\nu-1,\nu+2)$ and a $\nu_1$ supported away from zero, and the notation $v_\nu$ for a smooth in time restriction to $(\nu-1,\nu+2)$, can be rewritten as
$$C\sum_{0\leq \nu\leq t}\Big|\int_1^t \sum_{N_j}\varphi_{j_4-j_5+j_6,j_1,j_2,j_3}\eta_\nu(\tau) \widehat{v_\nu(\tau)}(j_1)\overline{\widehat{v_\nu(\tau)}(j_2)}\widehat{v_\nu(\tau)}(j_3)\overline{\widehat{v_\nu(\tau)}(j_4)}\widehat{v_\nu(\tau)}(j_5)\overline{\widehat{v_\nu(\tau)}(j_6)}\frac{d\tau}{\tau^2}\Big|.$$
We use now  also Fourier transform in time to write the terms in the summation in $\nu$ as:
$$I_\nu:=\Big| \frac 1{\nu^2} \int_{N_\lambda}\sum_{N_j}\varphi_{j_4-j_5+j_6,j_1,j_2,j_3} \widehat {\mu_\nu v_{\nu}}(\lambda_1,j_1)\overline{\hat v_{\nu}(\lambda_2,j_2)} \hat v_{\nu}(\lambda_3,j_3)\overline{\hat v_{\nu}(\lambda_4,j_4)} \hat v_{\nu}(\lambda_5,j_5)\overline{\hat v_{\nu}(\lambda_6,j_6)} d\Lambda\Big|,$$
where $\mu_\nu(\tau):=\frac{\nu^2}{\tau^2}\eta_\nu(\tau)\mathbb I_{[1,t]}(\tau)$, $d\Lambda:=d\lambda_1...d\lambda_6$ and $N_\lambda$ is the restriction set to $\lambda_1-\lambda_2+\lambda_3-\lambda_4+\lambda_5-\lambda_6=0$. 

We collect now some basic results about the spaces $X^{s,b}$. One can easily check that 
\begin{equation}\label{ldgjfdslkjhdgks} 
 \| \frac{\nu^2}{\tau^2}\eta_\nu(\tau) F(\tau,x) \|_{X^{s, b}} \lesssim 
\| F \|_{X^{s, b}}.
\end{equation}
We have (Lemma 2.1 in \cite{So}), for all $\varepsilon >0$ sufficiently small and $b < 1/2$:
\begin{equation}\label{Bouest}
\| 1_{[\alpha,\beta]} (\tau) F(\tau,x) \|_{X^{s, b}} \lesssim_{\varepsilon} 
\| F \|_{X^{s, b + \varepsilon}},
\end{equation}
with constant that does not depend on the interval $[\alpha,\beta]$.
Interpolating the embeddings $X^{0, 3/8} \hookrightarrow L^{4}_{\tau,x}$
and $X^{0+, \frac12 + } \hookrightarrow L^{6}_{\tau,x}$ (see for instance (27)-(28) in \cite{So}), for $\varepsilon >0$ yields
\begin{equation}\label{Bouest2}
\| F \|_{L^{6-\varepsilon'}_{\tau,x}} \lesssim_{\varepsilon',  \varepsilon'' } 
\| F \|_{X^{\varepsilon'', \frac12 - 2\varepsilon''}}  ,
\end{equation}
as long as we take $0 < \varepsilon'' < \varepsilon_1 < \varepsilon' <  \varepsilon_2$ with 
$\varepsilon_1$ and $\varepsilon_2$ sufficiently small thresholds.

Getting back to estimating $I_\nu$ we distinguish two cases.
First we assume that 
\begin{equation}\label{Cond1}
\max \{\langle  j_4\rangle, \langle j_5\rangle, \langle j_6\rangle \} \gtrsim   \max \{\langle j_4 - j_5 +j_6\rangle, \langle j_1\rangle, \langle j_2\rangle, \langle j_3\rangle \},
\end{equation}
We assume, without loss of generality, that the max is attained at $\langle j_4\rangle$ and also at $\langle j_5\rangle$, since the alternated sum of the $j$'s vanishes. It will be clear by the argument that all the other 
combination of indexes are treated in the exact same way. 
We then use the estimate
$$
| \varphi_{j_4 - j_5 +j_6,j_1,j_2,j_3} | \lesssim  1, 
$$ 
that follows from Lemma \ref{lemmabound}, to bound
\begin{align}\label{nfdjlskjnfgs}
I_\nu \leq  \frac{C}{\nu^2}   
\int_{N_\lambda}
 \sum_{N_j} |\widehat {\mu_\nu v_{\nu}}(\lambda_1,j_1)|\,\Pi_{l=2}^6|\hat v_{\nu}(\lambda_l,j_l)| d\Lambda.
\end{align}
Defining for any $x\in\mathbb T$ and  $6 \varepsilon \ll r$:
$$
F_1 (\tau,x) :=  \int \sum_{j_1}e^{i x j_1+i \tau \lambda_1  } 
\langle j_1\rangle^{-\varepsilon}  |\widehat{\mu_\nu v_\nu}(j_1, \lambda_1)| d \lambda_1,\, F_4 (\tau, x) := \int \sum_{j_4}  e^{i x j_4 + i\tau \lambda_4  } 
\langle j_4\rangle ^{5\varepsilon}   |\widehat{ v_\nu}(j_4, \lambda_4)| d \lambda_4,
$$
\begin{equation}\label{kdjgfklsekjgkdlsm}
F_j (\tau,x) :=\int  \sum_{j_l}  e^{i xj_l+i \tau \lambda_j  } 
\langle j_l\rangle^{-\varepsilon} |\widehat{v_\nu}(k_j, \lambda_j)| d \lambda_j, \qquad l = 2, 3, 5, 6,
\end{equation}
from \eqref{nfdjlskjnfgs} we obtain by using the vanishing of the alternated sums of the $j$'s and $\lambda$'s that 
\begin{align}
I_\nu \leq  \frac{C}{\nu^2}  \int \int F_1\overline{F_2}F_3\overline{F_4}F_5\overline{F_6}(\tau,x)   dx d\tau.
\end{align}
Taking suitable $\varepsilon''\ll \varepsilon' \ll \varepsilon$ and using
\eqref{Bouest2}, \eqref{Bouest} and \eqref{ldgjfdslkjhdgks} we note that 
$$
\| F_1 \|_{L^{6-\varepsilon'}_{\tau,x}} \leq C\| F_1 \|_{X^{\varepsilon'', \frac12 - 2\varepsilon''}} \leq C  \| \mu_\nu v_\nu \|_{X^{\varepsilon''-\varepsilon, \frac12 - 2\varepsilon''}}\leq C  \| \frac {\nu^2} {\tau^2}\eta_\nu v_\nu(\tau,x) \|_{X^{\varepsilon''-\varepsilon, \frac12 - \varepsilon''}}$$
$$\leq C  \| v \|_{X_\nu^{\varepsilon''-\varepsilon, \frac12 - \varepsilon''}}\leq C  \| v\|_{X^{0,b}_\nu},
$$
as $b \in (\frac12,\frac 58)$. Thus using
the H\"older inequality, the embedding $X^{0+, \frac12 +} \hookrightarrow L^{6}_{\tau,x}$ and its consequence by Sobolev embeddings
$X^{\frac{\delta}{2(6+\delta)}+, \frac12 +} \hookrightarrow L^{6+\delta }_{\tau,x}$ (see for instance Corolary 2.1 of \cite{Gr}) we get:
\begin{align}
I_\nu&  \leq  \frac{C}{\nu^2} \Big( \| F_1 \|_{L^{6-\varepsilon'}_{\tau,x}} 
\| F_4 \|_{L^{6+\frac{3\varepsilon'}{3-\varepsilon'}}_{\tau,x}}  \prod_{j=2,3,5,6} \| F_j \|_{L^{6}_{\tau,x}}  \Big)\leq \frac{C}{\nu^2} \Big( \| v \|_{X^{6 \varepsilon, b}_\nu} 
 \| v \|^5_{X^{0, b}_\nu}   \Big)
\\ \nonumber &
 \leq
\frac{C}{\nu^2}  
\| v(\nu) \|_{H^{6 \varepsilon}} \| v(\nu) \|^5_{L^2}\leq
\frac{C\|\{B_j(1)\}\|_{l^2}^5}{\nu^2}\sup_{\tau\in [1,t]}\|\{B_j(\tau)\}\|_{l^{2,6 \varepsilon}}
\\ \nonumber &
 \leq
\frac{C(r,\|\{B_j(1)\}\|_{l^2})}{\nu^2}\sup_{\tau\in [1,t]}\|\{B_j(\tau)\}\|_{l^{2,r}}^{0^+},
\end{align} 
as $6 \varepsilon \ll r$. Therefore we get \eqref{neweq} in the case \eqref{Cond1}.

In the remaining case when \eqref{Cond1} does not hold we have 
$$
\max \{\langle  j_4\rangle, \langle j_5\rangle, \langle j_6\rangle \} \lesssim  \max \{\langle j_4 - j_5 +j_6\rangle, \langle j_1\rangle, \langle j_2\rangle, \langle j_3\rangle \}.
$$
Thus we may assume without loss of generality that the maximum is attained on 
$\langle j_1\rangle$. 
We also recall that from Lemma \ref{lemmabound} we get 
\begin{equation}\label{mfkdlsfgmdksldkgj}
| \varphi_{j_4 - j_5 +j_6,j_1,j_2,j_3} | \lesssim 
\frac{1}{\max \{\langle j_4 - j_5 +j_6\rangle, \langle j_1\rangle, \langle j_2\rangle, \langle j_3\rangle \}} .
\end{equation}
We define:
$$
F_1 ( \tau,x) := \sum_{k_1} \int e^{i xj_1  + i\tau \lambda_1  } 
\langle j_1\rangle^{5 \varepsilon -1} |\widehat{\mu_\nu v}(j_1, \lambda_1)| d \lambda_1,
$$
and $F_j$ as in \eqref{kdjgfklsekjgkdlsm} but for $j=2,3,4,5,6$. Proceeding as above (using the $L^{6+\varepsilon'}$ norm for $F_2$, for instance) we arrive to the estimate
\begin{align}
I_\nu \leq    \frac{C}{\nu^2}  \| v \|_{X^{ 6 \varepsilon -1, b}_\nu} 
 \| v \|^5_{X^{0, b}_\nu}   
 \leq  \frac{C}{\nu^2}  
\| v \|^6_{X^{0, b}_\nu} 
 \leq  \frac{C}{\nu^2}   \| v(\nu) \|^6_{L^2}
 \leq   C  \| v(1) \|^5_{L^2}=C(\|\{B_j(1)\}\|_{l^2}),  
\end{align} 
where here we choose $\varepsilon<\frac 16$. This is again sufficient for getting \eqref{neweq}.

\end{proof}


\subsection{The asymptotic behavior of $B_j$'s}

We shall prove now first the following asymptotic behavior.
\begin{lemma}\label{propas}
For data $\{B_k(1)\}\in l^{2,s}$ with $0<s<\frac 12$ and for all $k\in\mathbb Z$ we have:
$$\exists\,\beta_k:=\lim_{t\rightarrow\infty}\tilde B_k(t),$$
where
$$\tilde B_k(t):=e^{i2M\log t-i\int_1^t|B_k(\tau)|^2\frac{d\tau}{\tau} }B_k(t),$$
with the following estimate on the rate of convergence:
$$\sup_{k\in\mathbb Z}|\tilde B_k(t)-\beta_k|\leq  \frac{C(\|\{B_j(1)\}\|_{ l^{2,s}})}{t}.$$
\end{lemma}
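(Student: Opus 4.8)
The plan is to run a modified-scattering argument in the spirit of Ozawa and Kato--Pusateri, but where the remainder is integrable only in the oscillatory sense. First I would record the identity obtained by differentiating the modulated profile. Writing $\theta_k(t):=2M\log t-\int_1^t|B_k(\tau)|^2\frac{d\tau}{\tau}$, so that $\tilde B_k=e^{i\theta_k}B_k$ and $\theta_k'(t)=\frac{2M-|B_k|^2}{t}$, a direct computation from the split form \eqref{Bjsystdev} shows that the resonant term $\frac1t(2M-|B_k|^2)B_k$ is cancelled exactly by $i\theta_k'e^{i\theta_k}B_k$, leaving
$$\partial_t\tilde B_k(t)=-\frac{i}{t}e^{i\theta_k(t)}\sum_{NR_k}e^{-it\omega_{k,j_1,j_2}}B_{j_1}\overline{B_{j_2}}B_{j_3}(t)=:-ie^{i\theta_k(t)}P_k(t).$$
It then suffices to prove that $\int_t^\infty e^{i\theta_k}P_k\,d\tau=O(1/t)$ uniformly in $k$: the limit $\beta_k:=\lim_t\tilde B_k(t)$ exists by the Cauchy criterion, $\tilde B_k(t)-\beta_k=i\int_t^\infty e^{i\theta_k}P_k\,d\tau$ obeys the claimed bound, and for bounded $t$ the estimate is trivial by mass conservation \eqref{mass}.

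The point is that $P_k$ is not absolutely summable over $NR_k$ (the factor $1/t$ is only borderline integrable and the $j$-sum converges merely conditionally), so I would exploit the non-resonant oscillation term by term. For fixed $(j_1,j_2,j_3)\in NR_k$ the phase is $\Phi(\tau)=\theta_k(\tau)-\tau\omega$ with $\omega=(k-j_1)(j_1-j_2)$ a nonzero integer; since $\theta_k'(\tau)\to0$ and $|\omega|\ge1$, one has $|\Phi'(\tau)|\ge\frac12$ for $\tau\ge t_0(M)$, and I integrate by parts once in $\tau$ on $[t,\infty)$ (the boundary at $\infty$ vanishing). The boundary term at $t$ is $\sum_{NR_k}\frac{|B_{j_1}\overline{B_{j_2}}B_{j_3}|(t)}{t|\Phi'|}\lesssim\frac1t\sum_{NR_k}\frac{|B_{j_1}\overline{B_{j_2}}B_{j_3}|(t)}{|(k-j_1)(j_1-j_2)|}$, which is summable by Cauchy--Schwarz in the divisor variables $k-j_1$ and $j_1-j_2$ exactly as in the proof of \eqref{phiboundsum}, hence $\lesssim\frac1t\|\{B_j(1)\}\|_{l^2}^3$ by \eqref{mass}. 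The terms carrying $\frac1{\tau^2}$, as well as the one carrying $\Phi''=O(1/\tau^2)$, integrate to $O(1/t)$ in the same way; all of these are controlled uniformly in $k$ purely through the summable weight $1/|\omega|$, with no use of the extra regularity.

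The single dangerous term is the one produced when the derivative in the integral falls on $\partial_\tau G$, $G=B_{j_1}\overline{B_{j_2}}B_{j_3}$: the non-resonant part of $\partial_\tau B_{j_\ell}$ is again a conditionally convergent cubic sum, so expanding it yields a quintic expression whose absolute sum is a threefold convolution of $\{|B_j|\}$, which fails to converge at regularity $0<s<\frac12$. This is precisely where I would invoke Lemma \ref{bounddert}, and I expect reconciling the conditional convergence of this spatial sum with the borderline time decay to be the main obstacle. Rather than expanding, I keep $(\partial_\tau B_{j_1})_{\mathrm{nonres}}=-iP_{j_1}$ intact, localize in time by the partition of unity $\eta_\nu$, and pair by Parseval: Lemma \ref{bounddert} bounds $|\mathcal F(\eta_\nu P_{j_1})(\lambda)|$ by $\frac{C}{\nu\langle\lambda\rangle^\delta}\|\{B_j(\nu)\}\|_{l^{2,\sigma}}$ with $\sigma>0$ arbitrarily small, while the companion slowly-varying factor $\eta_\nu e^{i\Phi}\overline{B_{j_2}}B_{j_3}$ has time-Fourier transform integrable in $\lambda$ through the Bourgain norms $H^{0,b}_\nu$ of \eqref{Best}; Lemma \ref{lemmawev} makes $\|\{B_j(\nu)\}\|_{l^{2,\sigma}}$ uniformly bounded in $\nu$ (here $0<\sigma\le s<\frac12$ is used). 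Crucially, the integration by parts has already left the explicit prefactor $\frac1{\tau\Phi'}\sim\frac1{\nu|\omega|}$, supplying a second power of $1/\nu$ on top of the one furnished by Lemma \ref{bounddert}, while the weight $1/|\omega|$ restores summability over $NR_k$ by the same Cauchy--Schwarz as above. Consequently each time-slab contributes $O(1/\nu^2)$, and summing over $\nu\gtrsim t$ yields the desired $O(1/t)$ tail, completing the proof of convergence and of the rate.
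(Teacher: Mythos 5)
Your proposal is correct and follows essentially the same route as the paper: cancel the resonant term with the logarithmic phase, integrate by parts in $\tau$ against the non-resonant oscillation to gain the summable weight $1/|\omega_{k,j_1,j_2}|$, treat the boundary and $O(1/\tau^2)$ terms by Cauchy--Schwarz, and handle the term where $\partial_\tau$ falls on a $B$-factor via the time partition of unity, Fourier transform in time, Lemma \ref{bounddert} and the Bourgain norms, with Lemma \ref{lemmawev} supplying the uniform $l^{2,\sigma}$ bound. The only cosmetic difference is that you absorb $\theta_k$ into the oscillatory phase (requiring $\tau\ge t_0(M)$ and producing a $\Phi''$ term) whereas the paper keeps $e^{i\theta_k}/\tau$ in the amplitude and integrates only $e^{-i\tau\omega}$; the two are equivalent.
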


\begin{proof}
We first note that in view of \eqref{Bjsystdev} we get
$$i\partial_t \tilde B_k(t)=\frac{e^{i2M\log t-i\int_1^t|B_k(\tau)|^2\frac{d\tau}{\tau} }}{t}\sum_{NR_k}e^{-it\omega_{k,j_1,j_2}}B_{j_1}(t)\overline{B_{j_2}(t)}B_{j_3}(t),$$
so for $t_2\geq t_1\geq 1$ we have by integration by parts:
$$\tilde B_k(t_2)-\tilde B_k(t_1)=\int_{t_1}^{t_2}\frac{1}{\tau}\sum_{NR_k}e^{-i\tau\omega_{k,j_1,j_2}}e^{i2M\log \tau-i\int_1^\tau |B_k(\tau')|^2\frac{d\tau'}{\tau'} }(B_{j_1}\overline{B_{j_2}}B_{j_3})(\tau)d\tau$$
$$=\Big[\sum_{NR_k}\frac{e^{-i\tau\omega_{k,j_1,j_2}}}{\omega_{k,j_1,j_2}}\frac{e^{i2M\log \tau-i\int_1^\tau |B_k(\tau')|^2\frac{d\tau'}{\tau'}}}{\tau}(B_{j_1}\overline{B_{j_2}}B_{j_3})(\tau)\Big]_{t_1}^{t_2}$$
$$-\int_{t_1}^{t_2}\sum_{NR_k}\frac{e^{-i\tau\omega_{k,j_1,j_2}}}{\omega_{k,j_1,j_2}}\Big(\frac{e^{i2M\log \tau-i\int_1^\tau |B_k(\tau')|^2\frac{d\tau'}{\tau'}}}{\tau}(B_{j_1}\overline{B_{j_2}}B_{j_3})(\tau)\Big)_\tau d\tau.$$
The boundary term can be easily upper-bounded by Cauchy-Schwarz in $j_1,j_2$ by
$$\sup_{\tau\in [t_1,t_2]}\frac 1\tau \sum_{NR_k}\frac{|B_{j_1}B_{j_2}B_{j_3}|)(\tau)}{|(k-j_1)(j_1-j_2)|}\leq \frac{C(\|\{B_j(1)\}\|_{l^2})}{t_1}.$$
So can be the terms in the integral when the derivative in $\tau$ falls on $\frac{e^{i2M\log \tau-i\int_1^\tau |B_k(\tau')|^2\frac{d\tau'}{\tau'}}}{\tau}$. Remains to estimate just one kind of term, when the derivative falls on $B_{j_3}$ for instance, on which we use the partition of unity in time:
$$\sum_\nu \frac 1\nu  \int_{t_1}^{t_2}\tilde \eta_\nu(\tau)\sum_{NR_k}\frac{e^{-i\tau\omega_{k,j_1,j_2}}}{(k-j_1)(j_1-j_2)}e^{i2M\log \tau-i\int_1^\tau |B_k(\tau')|^2\frac{d\tau'}{\tau'}}B_{j_1}\overline{B_{j_2}}\partial_\tau B_{k-j_1+j_2} (\tau)d\tau.$$
We use Fourier transform in time to write this term as:
$$J_k:=\sum_{t_1\leq \nu\leq t_2}\Big| \frac 1\nu \int\int\int\int \sum_{m\neq 0;\, j_1,j_2\in\Lambda_{k,m}}\frac {e^{i\tau (m-\lambda_1+\lambda_2-\lambda_3)}}me^{i2M\log \tau-i\int_1^\tau |B_k(\tau')|^2\frac{d\tau'}{\tau'}}\tilde \eta_\nu(\tau)$$
$$\left.\times \hat B_{j_1,\nu}(\lambda_1)\widehat {\overline{B}}_{j_2,\nu}(\lambda_2)\widehat {\partial_\tau B}_{k-j_1+j_2,\nu}(\lambda_3) d\lambda_1d\lambda_2d\lambda_3 d\tau\right|.$$
Now we use Lemma \ref{bounddert}:
$$J_k\leq \sum_{t_1\leq \nu\leq t_2} \frac C{\nu^2} \int\int\int \sum_{m\neq 0;\,j_1,j_2\in\Lambda_{k,m}}\frac{|\widehat{\tilde \eta_{\nu,k}}(m-\lambda_1+\lambda_2-\lambda_3)|}{|m|}|\hat B_{j_1,\nu}(\lambda_1)\widehat {\overline{B}}_{j_2,\nu}(\lambda_2)| d\lambda_1d\lambda_2d\lambda_3$$
$$\leq  \sum_{t_1\leq \nu\leq t_2} \frac {\tilde C}{\nu^2} \int\int\sum_{m\neq 0;\,j_1,j_2\in\Lambda_{k,m}}\frac 1{|m|}|B_{j_1,\nu}(\lambda_1)\widehat {\overline{B}}_{j_2,\nu}(\lambda_2)| d\lambda_1d\lambda_2.$$
Here the constant is the one of Lemma \ref{bounddert} (note that for the sequence of functions $\eta_\nu$ that we are considering we have $\||\xi|^{1^+}\hat\eta_\nu(\xi)\|_{L^\infty}$ uniformly bounded in $\nu$):
\begin{equation}\label{Constagain}
\tilde C=C(\epsilon,\|\{B_j(1)\}\|_{ l^{2}}) \|\{B_j(\nu)\}\|_{l^{2,\epsilon}},
\end{equation}
Now we perform successively Cauchy-Schwarz in $j_1,j_2$, for which we use again, for $m\neq 0$ and $\epsilon>0$:
$$\# \Lambda_{k,m}\leq C(\epsilon)m^\epsilon,$$ 
then in $m$, and eventually in $\lambda_1,\lambda_2$:
$$J_k\leq \tilde C\sum_{t_1\leq \nu\leq t_2} \frac { C(\epsilon)}{\nu^2} \int\int\sum_{m\neq 0}\frac{1}{|m|^{1-\epsilon}}\Big(\sum_{j_1,j_2\in\Lambda_{k,m}}  |\hat B_{j_1,\nu}(\lambda_1)\widehat {\overline{B}}_{j_2,\nu}(\lambda_2)|^2\Big)^\frac 12 d\lambda_1d\lambda_2$$
$$\leq \tilde C\sum_{t_1\leq \nu\leq t_2} \frac { C(\epsilon)}{\nu^2} \int\int\Big(\sum_{m\neq 0;\,j_1,j_2\in\Lambda_{k,m}} \frac{1}{|m|^{1-4\epsilon}}  |\hat B_{j_1,\nu}(\lambda_1)\widehat {\overline{B}}_{j_2,\nu}(\lambda_2)|^2\Big)^\frac 12 d\lambda_1d\lambda_2$$
$$\leq \tilde C\sum_{t_1\leq \nu\leq t_2} \frac { C(\epsilon)}{\nu^2} \Big(\int\int\sum_{j_1,j_2} |\langle \lambda_1\rangle^b\hat B_{j_1,\nu}(\lambda_1)\langle \lambda_2\rangle^b\widehat {\overline{B}}_{j_2,\nu}(\lambda_2)|^2d\lambda_1d\lambda_2\Big)^\frac 12 $$
$$\leq \tilde C\sum_{t_1\leq \nu\leq t_2} \frac { C(\epsilon)}{\nu^2} \|B\|_{H^{b,0}_\nu}^2\leq  \tilde C\sum_{t_1\leq \nu\leq t_2} \frac { C(\epsilon,\|\{B_j(1)\}\|_{ L^{2}})}{\nu^2}  \leq \frac{C(\epsilon,\|\{B_j(1)\}\|_{ l^{2}})}{t_1}\sup_{\tau\in[t_1,t_2]}\|\{B_j(\tau)\}\|_{l^{2,\epsilon}}$$
$$\leq \frac{C(\epsilon,\|\{B_j(1)\}\|_{ l^{2}})}{t_1}\|\{B_j(1)\}\|_{l^{2,\epsilon}}\leq \frac{C(\epsilon,\|\{B_j(1)\}\|_{ l^{2}})}{t_1}\|\{B_j(1)\}\|_{l^{2,s}}^{0^+},$$
where we have used the expression of the constant from \eqref{Constagain}, Lemma \ref{lemmawev} and interpolation.

\end{proof}

We can now prove Theorem \ref{wbl}. In view of \eqref{Bjsystdev} we have
$$i\partial_t \Big(e^{i(2M-|\beta_k|^2)\log t }B_k(t)\Big)=\frac{e^{i(2M-|\beta_k|^2)\log t }}{t}\sum_{NR_k}e^{-it\omega_{k,j_1,j_2}}B_{j_1}(t)\overline{B_{j_2}(t)}B_{j_3}(t)$$
$$-\frac {e^{i(2M-|\beta_k|^2)\log t }}t (|B_k(t)|^2-|\beta_k|^2)B_k(t).$$
Then, by integrating in time the first term as for the proof of Lemma \ref{propas}, and by integrating the second term by using the convergence result of Lemma \ref{propas} we obtain
$$\exists\,\alpha_k:=\lim_{t\rightarrow\infty}e^{i(2M-|\beta_k|^2)\log t }B_k(t),$$
with the following estimate on the rate of convergence:
$$|B_k(t)-\alpha_ke^{i(2M-|\alpha_k|^2)\log t }|\leq  \frac{C(\|\{B_j(1)\}\|_{ l^{2}})}{t}.$$
Recalling definition \eqref{mass} of the ``mass" we obtain the conclusion \eqref{blupth} of Theorem \ref{wbl}.\\

\begin{remark}
Theorem \ref{wbl} corresponds to asymptotic completeness for the $B_j$'s. For proving existence of wave operators at the same level of regularity one may proceed as follows. 
Let $\{\alpha_k\}_{k\in\mathbb Z}\in l^{2,s}$ with $0<s$ and let $M:=\sum_k|\alpha_k|^2$ and construct solutions for $t\geq T(\{\alpha_k\})$ to 
$$i\partial_t B_{k}=\frac 1t \sum_{NR_k}e^{it\omega_{k,j_1,j_2}}(\alpha_{j_1}+B_{j_1})\overline{(\alpha_{j_2}+B_{j_2})}(\alpha_{j_3}+B_{j_3})(t)-\frac 1t|(\alpha_k+B_{k})|^2(\alpha_k+B_{k})(t),$$
with $\|B_k(t)\|_{l^{2,s}}\overset{t\rightarrow\infty}{\longrightarrow} 0$ by performing a logarithmic phase change and by doing a fixed point argument for\footnote{To avoid smallness hypothesis on $\{\alpha_k\}_{k\in\mathbb Z}$ that would come from the linear term in the last integral term of the fixed point operator, this last integrant can be replaced by an oscilatory one in the same spirit as in \S 2.1 of \cite{BVAnnPDE}.} 
$$\Phi(\{B_{j}\}_{j\in\mathbb Z})_k(t)=i\int_t^\infty  \sum_{NR_k}e^{i\tau\omega_k+i(|\alpha_{j_1}|^2-|\alpha_{j_2}|^2+|\alpha_{j_3}|^2-|\alpha_k|^2)\log\tau}(\alpha_{j_1}+B_{j_1})\overline{(\alpha_{j_2}+B_{j_2})}(\alpha_{j_3}+B_{j_3})(\tau)\frac{d\tau}\tau,$$
$$-i\int_t^\infty (|(\alpha_k+B_{k})|^2-|\alpha_k|^2)(\alpha_k+B_{k})(\tau)\frac{d\tau}\tau,$$
in the space of sequences $\{B_{k}\}_{k\in\mathbb Z}$ having the property that for all truncations $\eta_\nu$ of support of size one, at distance of size $ \nu$ from the origin, 
$$\sum_{\lambda,k} \langle \lambda\rangle^{2b}\langle k\rangle^{2s} |\widehat{\eta_\nu B_k}(\lambda)|^2\leq c(\nu),$$ 
with $c(\nu)$ summable in $\nu$ and $b>\frac 12$.
\end{remark}
\medskip


\section{Proof of Theorem \ref{thbf}}
As explained in the Introduction, we shall prove that $\|\{B_j(t)\}\|_{l^{2,\frac 32^+}}$ is uniformly bounded in time. To do so we shall first prove a uniform in time bound on the $l^{2,1}$-norms, then we shall revisit Lemma \ref{bounddert} at higher regularity, and gathering all together we shall estimate the evolution in time of the $l^{2,\frac 32^+}$-norm similarly as for Lemma \ref{lemmawev}.

We start by noting that from \eqref{NLSv} we get the energy evolution law:
If
\begin{equation}\label{E}E(v)(t):=\frac12\|\nabla v(t)\|_{L^2}^2-\frac{1}{4t}\| v(t) \|_{L^4}^4,
\end{equation}
then
\begin{equation}\label{Ederivative}
\partial_tE(v)(t)=\frac 1{4t^2} \| v(t) \|_{L^4}^4.
\end{equation}

\begin{lemma}\label{controlenergy}
For $0<s<\frac 12$ we have the following control:
$$\sum_k \langle k\rangle^{2}|B_k(t)|^2\leq \sum_k \langle k\rangle^{2}|B_k(1)|^2+ C(s,\|\{B_j(1)\}\|_{l^2})\|\{B_j(1)\}\|_{l^{2,s}}^\theta,$$
where $\theta(s)>2$ and $\theta(\frac 12^-)=2^+$.
\end{lemma}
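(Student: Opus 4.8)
The plan is to control the growth of $\sum_k \langle k\rangle^2 |B_k(t)|^2 = \|v(t)\|_{H^1}^2$ (modulo the harmless low-frequency part handled by mass conservation) by exploiting the energy identity \eqref{Ederivative}. Since $E(v)(t) = \frac12\|\nabla v(t)\|_{L^2}^2 - \frac{1}{4t}\|v(t)\|_{L^4}^4$, the quantity I want to bound is essentially $2E(v)(t) + \frac{1}{2t}\|v(t)\|_{L^4}^4$. First I would integrate \eqref{Ederivative} from $1$ to $t$ to obtain
$$
E(v)(t) = E(v)(1) + \frac14 \int_1^t \frac{1}{\tau^2}\|v(\tau)\|_{L^4}^4\,d\tau,
$$
which reduces everything to estimating $\int_1^\infty \tau^{-2}\|v(\tau)\|_{L^4}^4\,d\tau$ together with the pointwise term $\tfrac{1}{t}\|v(t)\|_{L^4}^4$. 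The decisive gain here is the factor $\tau^{-2}$ coming from the non-autonomous coefficient in \eqref{NLSv}: it provides summability in time, which is exactly what makes the borderline regularity $s<\frac12$ accessible.

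Next I would estimate the $L^4$ term on each unit time interval $[\nu,\nu+1]$ using the Bourgain-space machinery already set up. By \eqref{Bouest2} (more precisely the embedding $X^{0,3/8}\hookrightarrow L^4_{\tau,x}$ recalled there) combined with the localization estimates \eqref{ldgjfdslkjhdgks}–\eqref{Bouest} and the local well-posedness bound \eqref{Best}, one has on each piece
$$
\int_\nu^{\nu+1}\|v(\tau)\|_{L^4}^4\,d\tau \leq C\|v\|_{X^{0,b}_\nu}^4 \leq C\|v(\nu)\|_{L^2}^4 = C M^2,
$$
so that each unit interval contributes a bounded amount. Summing $\int_1^t \tau^{-2}\|v(\tau)\|_{L^4}^4\,d\tau \leq \sum_\nu \nu^{-2}\, C M^2 < \infty$ shows this contribution is uniformly bounded by $C(\|\{B_j(1)\}\|_{l^2})$ — in fact with no loss of regularity at all. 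The subtlety is the remaining pointwise term $\tfrac{1}{t}\|v(t)\|_{L^4}^4$, which is not controlled by $L^2$ alone; here I would interpolate: by Gagliardo–Nirenberg on $\mathbb T$, $\|v(t)\|_{L^4}^4 \lesssim \|v(t)\|_{\dot H^{1/4}}\, \|v(t)\|_{L^2}^3$, or more flexibly $\|v(t)\|_{L^4}^4 \lesssim \|v(t)\|_{H^s}^{\sigma}\|v(t)\|_{L^2}^{4-\sigma}$ for appropriate $\sigma=\sigma(s)$, and then invoke Lemma \ref{lemmawev} to bound $\|v(t)\|_{H^s} = \|\{B_j(t)\}\|_{l^{2,s}}$ uniformly in $t$.

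The main obstacle, and the source of the exponent $\theta(s)$ in the statement, is precisely this pointwise $L^4$ term: the factor $\tfrac1t$ alone does not kill it, so I must absorb $\|v(t)\|_{L^4}^4$ into the already-controlled $H^s$ norm via Gagliardo–Nirenberg. Tracking the interpolation exponent, the embedding $H^s(\T)\hookrightarrow L^4(\T)$ requires $s\geq \frac14$, but below this threshold one pays a price, producing a power $\theta=\theta(s)$ of $\|\{B_j(1)\}\|_{l^{2,s}}$ that exceeds $2$ and degenerates to $2^+$ as $s\to\frac12^-$ (where $H^s\hookrightarrow L^\infty$ barely fails and the interpolation becomes sharp). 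The bookkeeping is to write $\|v(t)\|_{L^4}^4 \leq C\|v(t)\|_{L^2}^{4-\theta}\|v(t)\|_{H^s}^{\theta}$ with $\theta=\theta(s)>2$, then bound $\|v(t)\|_{H^s}^\theta$ by $C(\|\{B_j(1)\}\|_{l^2})\|\{B_j(1)\}\|_{l^{2,s}}^\theta$ using Lemma \ref{lemmawev}, and finally collect the pointwise and integral contributions together with $\|v(1)\|_{H^1}^2 = \sum_k\langle k\rangle^2|B_k(1)|^2$ to arrive at the claimed bound. I expect the verification of the precise value of $\theta(s)$ and its limiting behavior $\theta(\frac12^-)=2^+$ to be the only delicate computational point; everything else is assembling estimates \eqref{Best}, \eqref{Bouest2}, Lemma \ref{lemmawev}, and the energy law \eqref{Ederivative}.
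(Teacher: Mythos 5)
Your proposal is correct and follows essentially the same route as the paper: integrate the energy law \eqref{Ederivative}, control the resulting $L^4$ terms by an $H^s$-type norm (the paper via the embedding $H^{1/4}\hookrightarrow L^4$ followed by interpolation of the data norms, you via Gagliardo--Nirenberg directly at level $s$; your separate Bourgain-space treatment of the time-integral term is an unnecessary but harmless refinement), and close with Lemma \ref{lemmawev}, both routes yielding $\theta(s)=1/s$. The only slip is that your first displayed inequality $\|v\|_{L^4}^4\lesssim \|v\|_{\dot H^{1/4}}\|v\|_{L^2}^3$ is false (scaling forces $\sigma s=1$, i.e.\ the full fourth power of the $H^{1/4}$ norm), but the ``more flexible'' version with $\sigma=1/s$ that you actually use is the correct one.
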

\begin{proof}
By using the energy law \eqref{E}-\eqref{Ederivative} and Sobolev embeddings we have
$$\|\nabla v(t)\|_{L^2}^2-\|\nabla v(1)\|_{L^2}^2\leq C\sup_{1\leq \tau\leq t}\|v(\tau)\|_{L^4}^4\leq C \sup_{1\leq \tau\leq t}\|v(\tau)\|_{H^\frac 14}^4=C \sup_{1\leq \tau\leq t}\|\{B_j(\tau)\}\|_{H^{\frac 14}}^4.$$
Therefore by using Lemma \ref{lemmawev} with $r=\frac 14$ we end up with $\|\{B_j(1)\}\|_{l^{2,\frac 14}}^4$ that gives by interpolation $\|\{B_j(1)\}\|_{l^{2,s}}^\theta(s)$ with $\theta(s)>2$ as $s<\frac 12$, and $\theta(\frac 12^-)=2^+$.

\end{proof}

\begin{lemma}\label{lemmawevbis}
We have:
$$\|\{B_j(t)\}\|_{l^{2,\frac 32^+}}\leq C( r,\|\{B_j(1)\}\|_{ l^{2,\frac 32^+}}).$$
\end{lemma}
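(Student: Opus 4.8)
The plan is to repeat, at the regularity $r=\tfrac32^+$, the scheme carried out for Lemma~\ref{lemmawev}, with the two modifications forced by the fact that we are now \emph{above} the threshold $r=\tfrac12$. Writing $a(x)=\langle x\rangle^{2r}$ and starting from the symmetrized identity \eqref{Bjcons}, I would integrate by parts in time using the oscillation $e^{-i\tau\omega_{k,j_1,j_2}}$ on the nonresonant set, exactly as in \eqref{diffr}. This produces the same four types of contributions: two boundary terms, one genuinely integrable term carrying the extra factor $\tau^{-2}$, and the term in which $\partial_\tau$ falls on a Fourier mode $B_k$, which via \eqref{Bjsyst} becomes the sextilinear expression already met in \eqref{neweqbis}. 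Each integration by parts attaches the multiplier $\varphi_{k,j_1,j_2,j_3}$, so everything is reduced to controlling sums of the form $\sum_{NR_k}|\varphi_{k,j_1,j_2,j_3}|\,\prod|B|$.

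The first new ingredient is a replacement for Lemma~\ref{lemmabound}, which holds only for $r<\tfrac12$. Using the double-integral representation of the numerator already exploited in case (b) of Lemma~\ref{lemmabound}, namely $a(k)-a(j_1)+a(j_2)-a(j_3)=\int_{j_1}^{k}\!\int_{y}^{y-j_1+j_2}a''(x)\,dx\,dy$ over a domain of area comparable to $|\omega_{k,j_1,j_2}|=|(k-j_1)(j_1-j_2)|$, I would observe that $\varphi$ is an average of $a''$; since now $a''(x)\sim\langle x\rangle^{2r-2}$ is increasing in $|x|$, this yields
\[
|\varphi_{k,j_1,j_2,j_3}|\leq C\,\max\{\langle k\rangle,\langle j_1\rangle,\langle j_2\rangle,\langle j_3\rangle\}^{2r-2},\qquad r\geq 1,
\]
which for $r=\tfrac32^+$ amounts to a loss of $1^+$ derivatives placed on the largest frequency (the regime $r\geq\tfrac12$ being the one treated in Lemma~2.5 of \cite{FoSe}).

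Finally I would insert this bound and close the estimate. The crucial point is that the lost weight $\max^{2r-2}=\max^{1^+}$ sits on a \emph{single} factor; combined with the corresponding $B$ it asks for the $l^{2,1^+}$-norm of that factor only, which by interpolation between the uniformly bounded $l^{2,1}$-norm (Lemma~\ref{controlenergy}) and the target $l^{2,3/2^+}$-norm costs merely a power $\mu<1$ of $\|\{B_j\}\|_{l^{2,3/2^+}}$. The remaining factors are handled by the conserved mass \eqref{mass} together with a negligible weight $l^{2,0^+}$ and divisor bounds, so that after taking the supremum in $\tau$ the whole contribution can be resorbed, producing the analogue of \eqref{neweq} and hence the Lemma. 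The sextilinear term, where $\partial_\tau$ hits a mode, is treated as in the second half of the proof of Lemma~\ref{lemmawev}: passing to the Fourier transform in time, using the $X^{s,b}$ embeddings \eqref{Bouest}--\eqref{Bouest2}, the divisor bound and the summable factor $\nu^{-2}$, after revisiting Lemma~\ref{bounddert} so as to carry the extra weight $\langle k\rangle^{2r-2}$ on $\partial_\tau B_k$. I expect this to be the main obstacle: unlike the subcritical regime of Lemma~\ref{lemmawev}, the multiplier $\varphi$ no longer decays but grows, so one must verify that the unavoidable derivative loss is \emph{always} absorbed by the already-established energy bound of Lemma~\ref{controlenergy} rather than by the norm one is trying to control, and that the higher-regularity version of Lemma~\ref{bounddert} still delivers the $\nu^{-2}$ summability needed for the gluing in $\nu$.
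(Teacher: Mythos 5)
Your proposal follows essentially the same route as the paper: rerun the scheme of Lemma \ref{lemmawev} at $r=\frac32^+$ with the weaker multiplier bound $|\varphi_{k,j_1,j_2,j_3}|\lesssim\max\{\langle k\rangle,\langle j_1\rangle,\langle j_2\rangle,\langle j_3\rangle\}^{2r-2}$ (Lemma 2.5 of \cite{FoSe}, recovered by your double-integral argument), absorb the resulting $1^+$ derivative loss via the $l^{2,1}$ energy bound of Lemma \ref{controlenergy}, and distribute the loss onto the highest-frequency factors in the sextilinear term after revisiting Lemma \ref{bounddert}. The only cosmetic difference is that the paper splits the loss as $\frac12^+$ on each of the two largest comparable frequencies (e.g.\ $F_4$ and $F_5$ in case \eqref{Cond1}), which simultaneously supplies the $l^1$-summability needed to close the convolution sums and bounds the quadrilinear terms directly by $\|B(\tau)\|_{l^{2,1}}^4$, whereas you place the full $1^+$ on a single factor and close by interpolation against the target norm and resorption (note that mass plus an $l^{2,0^+}$ weight alone would not give the needed summability on the remaining factors, but the available $l^{2,1}$ control does); both variants work.
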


\begin{proof}
The proof follows the one of Lemma \ref{lemmawev} with $r=\frac 32^+$, which gives in particular the weaker estimate (see Lemma 2.5 in \cite{FoSe}):
$$\Big|\frac{\langle k\rangle ^{2r}-\langle j_1\rangle ^{2r}+\langle j_2\rangle ^{2r}-\langle j_3\rangle ^{2r}}{k^2-j_1^2+j_2^2-j_3^2}\Big|\leq C\max\{\langle k\rangle,\langle j_1\rangle,\langle j_2\rangle,\langle j_3\rangle\}^{1^+},\quad \forall  k, (j_1,j_2,j_3)\in NR_k.$$
On the first three terms in \eqref{diffr} we use this estimate and Lemma \ref{controlenergy} to get
$$\sum_{k;NR_k}|\varphi_{k,j_1,j_2,j_3}B_{j_1}\overline{B_{j_2}}B_{j_3}\overline{B_{k}(\tau)}|\leq C\|B(\tau)\|_{l^{2,1}}^4\leq C(\|\{B_j(1)\}\|_{ l^{2,1}}).$$
The last integral term in \eqref{diffr} involving $\partial_\tau B_k$ can be treated similarly as in the beginning of the proof of Lemma \ref{lemmawev},  distributing the loss $\max\{\langle j_4-j_5+j_6\rangle,\langle j_1\rangle,\langle j_2\rangle,\langle j_3\rangle\}^{1^+}$ coming from $\varphi_{j_4-j_5+j_6,j_1,j_2,j_3}$ on the terms corresponding to the highest frequencies: in the case \eqref{Cond1} on $F_4$ and $F_5$, and in the remaining case on $F_1$. 
\end{proof}

\end{document}